\documentclass[12pt,reqno]{amsart}
\usepackage{amsthm}
\usepackage{amssymb}
\usepackage{graphics}
\usepackage{tikz}
\usetikzlibrary{shapes,backgrounds,calc}
\usepackage{latexsym}
\usepackage{multicol}
\usepackage{verbatim,enumerate}

\usepackage{cite}
\usepackage{array}
\usepackage[colorlinks=true, linkcolor=blue, citecolor=blue, urlcolor=black]{hyperref}
\usepackage{hyperref}
\usepackage{amsmath, amscd,url}
\usepackage{multirow}
\usepackage{bigstrut}
\usepackage{longtable}

\usepackage{stackengine}
\newcommand\xrowht[2][0]{\addstackgap[.5\dimexpr#2\relax]{\vphantom{#1}}}

\advance\textwidth by 1.3in \advance\oddsidemargin by -.6in \advance\evensidemargin by -.6in
\parskip=2mm
%\usepackage{eucal}

%%%%%%%%%%%%%%%%%%%%%%%%%%%%%%%%%%%%%%%%%%%%

 % local Weyl module
 % affine Lie algebra
 % other affine Lie algebra
 % finite Lie algebra
 % other finite Lie algebra
 % finite Cartan
 % affine Cartan
 % (hyper)special maximal parabolic
 % affine positive nilpotent
 % affine Borel
 % current algebra
 % other current algebra
 % 1/2 Heisenberg
 % Heisenberg center

 % finite root system

%%%%%%%%%%%%%%%%%%%%%%%%%%%%%%%%%%%%%%%%%%%%

%\newcommand{\mathscr}{\EuScript}
\theoremstyle{definition}

\newtheorem{theorem}{Theorem}[section]
\newtheorem{lemma}[theorem]{Lemma}
\newtheorem{corollary}[theorem]{Corollary}
\theoremstyle{definition}
\newtheorem{definition}[theorem]{Definition}
\newtheorem{example}[theorem]{Example}
\newtheorem{proposition}[theorem]{Proposition}

\theoremstyle{remark}

\newcommand{\F}{\mbox{$\mathbb F$}}
\theoremstyle{definition}

\newcounter{cnt}
 \makeatletter
\def\mydggeometry{\makeatletter\dg@YGRID=1\dg@XGRID=20\unitlength=0.003pt\makeatother}
\makeatother \theoremstyle{remark}

% to make the notation environment unnumbered

\numberwithin{equation}{section}
\let\bwdg\bigwedge
\def\bigwedge{{\textstyle\bwdg}}

\newcommand{\nc}{\newcommand}
\newcommand{\rnc}{\renewcommand}

\nc{\cal}{\mathcal} \nc{\goth}{\mathfrak} \rnc{\bold}{\mathbf}

\nc\bomega{{\mbox{\boldmath $\omega$}}} \nc\bpsi{{\mbox{\boldmath $\Psi$}}}
 \nc\balpha{{\mbox{\boldmath $\alpha$}}}
 \nc\bpi{{\mbox{\boldmath $\pi$}}}
 \nc\bvpi{{\mbox{\boldmath $\varpi$}}}
\nc\chara{\operatorname{ch}}

  \nc\bxi{{\mbox{\boldmath $\xi$}}}
\nc\bmu{{\mbox{\boldmath $\mu$}}} \nc\bcN{{\mbox{\boldmath $\cal{N}$}}} \nc\bcm{{\mbox{\boldmath $\cal{M}$}}} \nc\blambda{{\mbox{\boldmath
$\lambda$}}}\nc\bnu{{\mbox{\boldmath $\nu$}}}

\makeatletter
\def\section{\def\@secnumfont{\mdseries}\@startsection{section}{1}%
  \z@{.7\linespacing\@plus\linespacing}{.5\linespacing}%
  {\normalfont\scshape\centering}}
\def\subsection{\def\@secnumfont{\bfseries}\@startsection{subsection}{2}%
  {\parindent}{.5\linespacing\@plus.7\linespacing}{-.5em}%
  {\normalfont\bfseries}}
\makeatother

 \nc{\Hom}{\operatorname{Hom}}
  \nc{\mode}{\operatorname{mod}}
\nc{\End}{\operatorname{End}} \nc{\wh}[1]{\widehat{#1}} \nc{\Ext}{\operatorname{Ext}} \nc{\ch}{\text{ch}} \nc{\ev}{\operatorname{ev}}
\nc{\Ob}{\operatorname{Ob}} \nc{\soc}{\operatorname{soc}} \nc{\rad}{\operatorname{rad}} \nc{\head}{\operatorname{head}}

 \nc{\Cal}{\cal} \nc{\Xp}[1]{X^+(#1)} \nc{\Xm}[1]{X^-(#1)}
\nc{\on}{\operatorname} \nc{\Z}{{\bold Z}} \nc{\J}{{\cal J}}  \nc{\Q}{{\bold Q}}

\nc{\N}{{\bold N}}  \nc\boa{\bold a} \nc\bob{\bold b} \nc\boc{\bold c} \nc\bod{\bold d} \nc\boe{\bold e} \nc\bof{\bold f} \nc\bog{\bold g}
\nc\boh{\bold h} \nc\boi{\bold i} \nc\boj{\bold j} \nc\bok{\bold k} \nc\bol{\bold l} \nc\bom{\bold m} \nc\bon{\mathbb n} \nc\boo{\bold o}
\nc\bop{\bold p} \nc\boq{\bold q} \nc\bor{\bold r} \nc\bos{\bold s} \nc\boT{\bold t} \nc\boF{\bold F} \nc\bou{\bold u} \nc\bov{\bold v}
\nc\bow{\bold w} \nc\boz{\bold z}\nc\ba{\bold A} \nc\bb{\bold B} \nc\bc{\mathbb C} \nc\bd{\bold D} \nc\be{\bold E} \nc\bg{\bold
G} \nc\bh{\bold H} \nc\bi{\bold I} \nc\bj{\bold J} \nc\bk{\bold K} \nc\bl{\bold L} \nc\bm{\bold M} \nc\bn{\mathbb N} \nc\bo{\bold O} \nc\bp{\bold
P} \nc\bq{\bold Q} \nc\br{\bold R} \nc\bs{\bold S} \nc\bt{\bold T} \nc\bu{\bold U} \nc\bv{\bold V} \nc\bw{\bold W} \nc\bz{\mathbb Z} \nc\bx{\bold
x} \nc\KR{\bold{KR}} \nc\rk{\bold{rk}} \nc\het{\text{ht }}

\nc\toa{\tilde a} \nc\tob{\tilde b} \nc\toc{\tilde c} \nc\tod{\tilde d} \nc\toe{\tilde e} \nc\tof{\tilde f} \nc\tog{\tilde g} \nc\toh{\tilde h}
\nc\toi{\tilde i} \nc\toj{\tilde j} \nc\tok{\tilde k} \nc\tol{\tilde l} \nc\tom{\tilde m} \nc\ton{\tilde n} \nc\too{\tilde o} \nc\toq{\tilde q}
\nc\tor{\tilde r} \nc\tos{\tilde s} \nc\toT{\tilde t} \nc\tou{\tilde u} \nc\tov{\tilde v} \nc\tow{\tilde w} \nc\toz{\tilde z} \nc\woi{w_{\omega_i}}

\begin{document}
\setcounter{section}{0}
\setcounter{tocdepth}{1}

%%%%%%%%%%%%%%%%%%%%%%%%%%%%%%%%%%%%%%%%%%%%

\title{discriminant and integral basis of pure nonic fields}
%\dedicatory{$($Dedicated to Professor Sudesh Kaur Khanduja on her $72^{nd}$ Birthday$)$}

\author{Anuj Jakhar}
\address{Indian Institute of Technology (IIT) Madras, India.}
\email{anujjakhar@iitm.ac.in, anujiisermohali@gmail.com}
\author{Neeraj Sangwan}
\address{LNMIIT, Jaipur, Rajasthan.}
\email{neeraj@lnmiit.ac.in, ms09089@gmail.com}

%\thanks{The financial support from IIT Bhilai is gratefully acknowledged by the author. }

\subjclass [2010]{11R04, 11R29}
\keywords{Discriminant, Integral basis, Monogenity}

\begin{abstract}
Let  $K = \Q(\theta)$ be an algebraic number field with $\theta$ satisfying an irreducible polynomial $x^{9} - a$ over the field $\Q$ of rationals and $\Z_K$ denote the ring of algebraic integers of $K$. In this article, we provide the exact power of each prime which divides the index of the subgroup $\Z[\theta]$ in $\Z_K$. Further, we give a $p$-integral basis of $K$ for each prime $p$. These $p$-integral bases lead to a construction of an integral basis of $K$ which is illustrated with examples. % As an application, we show that when $a$ is squarefree integer such that $a^{p_i-1} \not\equiv 1 \mod p_i^2$ for each $i$, $i = 1,2$, then $K$ has a power basis.
\end{abstract}

\maketitle
\vspace*{-0.3in}
\section{Introduction}\label{intro}
Let $\Z_K$ denote the ring of algebraic integers of an algebraic number field $K$. Let  degree of $K$ equal to $n$ over the field $\Q$ of rational numbers. The problem of computation of a $\Z$-basis of $\Z_K$ (called an integral basis of $K$) is in general a difficult task and has attracted the attention of many mathematicians (cf. \cite{ANH2}, \cite{Ded}, \cite{Fun}, \cite{Ga-Re}, \cite{HN}, \cite{Jakhar}, \cite{Jak}, \cite{JS}, \cite{JKS}, \cite{Wes}). In 1900, Dedekind \cite{Ded} gave an explicit integral basis for pure cubic fields. In 1910, Westlund \cite{Wes} described an integral basis for pure prime degree number fields. In 1984, Funakura \cite{Fun} provided an integral basis of all pure quartic fields. In 2015, Hameed and Nakahara \cite{HN} determined an integral basis of those pure octic fields $\Q(\sqrt[8]{a})$, where $a$ is squarefree integer. In 2017, Ga{\'a}l and Remete \cite{Ga-Re} gave an integral basis of $\Q(\sqrt[n]{a})$, where the integer $a$ is squarefree and $3\leq n\leq 9.$ %In 2020, Jakhar \cite{Jak} provided an explicit integral basis of pure sextic fields. 
Recently, Jakhar \cite{Jakhar}  provided an explicit integral basis of number fields of the type $\Q(\sqrt[p_1p_2]{a})$, where $p_1, p_2$ are distinct prime numbers.

For a prime number $p$, let $\Z_{(p)}$ denote the localization of the ring $\Z$ of integers at the prime ideal $p\Z$ and $S_{(p)}$ denote the integral closure of $\Z_{(p)}$ in an algebraic number field $K$. The ring $S_{(p)} = \{\frac{\alpha}{b} | \alpha \in \Z_K, b \in \Z\setminus p\Z\}$ is a free module over the principal ideal domain $\Z_{(p)}$ having rank equal to the degree of the extension $K/\Q$. A $\Z_{(p)}$-basis of the module $S_{(p)}$ is called a $p$-integral basis of $K$. Clearly an integral basis of an algebraic number field is its $p$-integral basis for each prime $p$.

Let $K = \Q(\theta)$ with $\theta$  a root of an irreducible polynomial $x^{9}-a \in \Z[x]$ over rationals. The aim of this paper is to provide the exact power of any  prime $p$ dividing the index $[\Z_K : \Z[\theta]]$ and to give a $p$-integral basis. These $p$-integral bases lead to a construction of an explicit integral basis of $K$ taking into considerations all the primes $p$ dividing the index of the subgroup $\Z[\theta]$ in $\Z_K$. We wish to point out here that if a prime $p$ does not divide $[\Z_K : \Z[\theta]]$, then it is easy to see that $\{1, \theta, \cdots, \theta^{8}\}$ is a $p$-integral basis of $K$.

Throughout the paper, for an algebraic number field $K = \Q(\theta)$ with $\theta \in \Z_K$, we shall denote the index of the subgroup $\Z[\theta]$ in $\Z_K$ by $I_K(\theta).$

The outline of the paper is as follows. In Section 2, we state our main result (Theorem \ref{main}) and provide some examples which illustrate the computation of an integral basis. In Section 3, we write some preliminary results for the proof of Theorem \ref{main}. In Section 4, we apply Section 3 and  compute $p$-integral basis of $\Q(\sqrt[9]{a})$ for each prime $p$ dividing $3a$. In Section 5, we write a proof of Theorem \ref{main}. 

\section{Main result and Examples}
In what follows, let $K = \Q(\theta)$ with $\theta$ a root of an irreducible polynomial $x^{9} - a$ belonging to $\Z[x]$, where  $a$ is a $9$-th power-free integer, i.e., $a$ is not divisible by the $9$-th power of any prime number.   Whenever  $3$ divides both $a$ and $v_{3}(a)$, we denote $\frac{v_{3}(a)}{3}$ by $c$; in this situation we can write $a = 3^{3c}b$ with $3 \nmid b$.

With the above notations, the following result provides the exact power of each prime $p$ dividing $I_K(\theta)$ and gives an explicit $p$-integral basis for all number fields of the type $\Q(\sqrt[9]{a})$.
\begin{theorem}\label{main}
Let $K = \Q(\theta)$ with $\theta$ a root of an irreducible polynomial $f(x) = x^{9}-a$, where $a$ is a $9$-th power-free integer. Let $b, c$ be as above. Then the following hold.
\begin{itemize}
\item[(1)] If $q$ is any prime divisor of $a$ such that $q$ does not divide $\gcd(9, v_q(a))$, then $v_q(I_K(\theta)) = \frac{8(v_q(a)-1)+\gcd(9, v_q(a)) - 1}{2}$ and the set 
$$S = \bigg\{\dfrac{\theta^i}{q^{\big\lfloor\frac{iv_q(a)}{9}\big\rfloor}} ~|~0\leq i\leq 8 \bigg\}$$ is a $q$-integral basis of $K$.
\item[(2)] If $3$ divides both  $a$, $v_{3}(a)$ and $k$ is the smallest positive integer such that $b\equiv k\mod 9$, then 
the highest power of $3$ dividing the index $I_K(\theta)$ is given by 
  $$v_{3}(I_K(\theta)) =\left\{\begin{array}{cl}
 12c,& \mbox{if}~ k\in \{4,5\};\\
{12c+1}, & \mbox{}~\mbox{if}~ k\in \{1,2,7,8\}.
\end {array}\right.$$
Moreover, for $k\in \{1,8\}$, let $q_j(\theta)$ denote the polynomial $\sum\limits_{s=0}^{3-j}\binom{3}{s}(3^{c}b)^{s}(\theta^{3}-3^{c}b)^{3-j-s}$, $1\leq j\leq 2$. Then the following set $S$ is a $3$-integral basis of $K$, where $q_1(\theta)$, $q_2(\theta)$ for $k\in \{2,4,5,7\}$ and $m_i$'s for $1\leq i\leq 7$ are as in Table \ref{tab:1} and Table \ref{tab:2} depending on the values of $c$ and $k$.
\begin{align*}
 S &= \{ 1,\theta, \frac{\theta^2}{3^{m_1}}, \frac{q_2(\theta)}{3^{m_2}}, \frac{\theta q_2(\theta)}{3^{m_3}}, \frac{\theta^2 q_2(\theta)}{3^{m_4}},\frac{q_1(\theta)}{3^{m_5}}, \frac{\theta q_1(\theta)}{3^{m_6}}, \frac{\theta^2 q_1(\theta)}{3^{m_7}}   \}.
\end{align*}
\begin{center}
\begin{longtable}[h!]{|m{0.7cm}|m{3.2cm}|m{3.75cm}|m{5.75cm}|} 
\hline 
  $c$ & $\phi(\theta)$   & $q_2(\theta)$ & $q_1(\theta)$\\ \hline
  &   &   &  \\
 1 & $\theta^3 - (-1)^{k}3b\theta - 3b$  & $\phi(\theta) + (-1)^k 9b\theta +9b$ &  $\phi(\theta)^2+ \phi(\theta) ((-1)^k 9b\theta +9b)$ $+ 27b^2 \theta^2 + (-1)^k 54 b^2\theta +(-1)^k 27b^3$ $+27b^2$  \bigstrut \xrowht{18pt} \\ \cline{1-4}
  &   &   &  \\
  2 & $\theta^3 - (-1)^{\lfloor \frac{k}{2}\rfloor}3b\theta^2 - 9b$  &$\phi(\theta) + (-1)^{\lfloor \frac{k}{2}\rfloor} 9  b  \theta^2 + 27  b ^2  \theta + (-1)^{\lfloor \frac{k}{2}\rfloor}108  b ^3$ $  + 27  b$ & $\phi(\theta)^2+ \phi(\theta) (-1)^{\lfloor \frac{k}{2}\rfloor} 9  b  \theta^2 + 27  b ^2  \theta + (-1)^{\lfloor \frac{k}{2}\rfloor}108  b ^3   + 27  b) + 405  b ^4  \theta^2  +(-1)^{\lfloor \frac{k}{2}\rfloor} 162  b ^2  \theta^2  + (-1)^{\lfloor \frac{k}{2}\rfloor} 243  b ^5  \theta + 486  b ^3  \theta + 729  b ^6   + (-1)^{\lfloor \frac{k}{2}\rfloor} 1944  b ^4  + 243  b ^2 $ \bigstrut \xrowht{18pt}\\ \hline
  \caption{\label{tab:1}The values of $q_1(\theta)$ and $q_2(\theta)$ for $k \in \{2,4,5,7\}$.}
\end{longtable}
\end{center}
\begin{center}
\begin{longtable}[h!]{|m{1.3cm}|m{0.7cm}|m{0.7cm}|m{0.7cm}|m{0.7cm}|m{0.7cm}|m{0.7cm}|m{0.7cm}|m{0.7cm}|} 
\hline 
  $k$ & $c$  &$m_1$ & $m_2$ & $m_3$ & $m_4$ & $m_5$ & $m_6$ & $m_7$\\ \hline
 $1,2,7, 8$&$1$ & $0$ & $1$ & $1$ & $2$ & $3$ & $3$ & $3$ \\ \hline
 $1,2,7, 8$&$2$ & $1$ & $2$ & $3$ & $3$ & $5$ & $5$ & $6$ \\ \hline
 $4,5$&$1$ & $0$ & $1$ & $1$ & $2$ & $2$ & $3$ & $3$ \\ \hline
 $4,5$&$2$ & $1$ & $2$ & $3$ & $3$ & $4$ & $5$ & $6$ \\ \hline
 
 \caption{\label{tab:2}The values of $m_i$'s for $1\leq i\leq 7$ depending on $c$ and $k$.}
\end{longtable}
\end{center}
\item[(3)] If a prime $q$ does not divide $3a$, then  $v_q(I_K(\theta)) = 0$ and the set $S = \{1, \theta, \cdots, \theta^{8}\}$ is a $q$-integral basis of $K$.
\item[(4)] If $3$ does not divide $a$, then 
\begin{itemize}
\item[(i)] When $9 \nmid (a^{2}-1)$, then $v_{3}(I_K(\theta)) = 0$ and the set  $\{1, \theta, \cdots, \theta^{8}\}$ is a $3$-integral basis of $K$.
\item[(ii)] When $v_3(a^{2}-1) = 2$, then $v_{3}(I_K(\theta)) = 3$ and the set $\{1, \theta, \theta^2, \cdots, \theta^5, \frac{\theta^6+a\theta^3+1}{3},$ $  \frac{(\theta^6+a\theta^3+1)\theta}{3},  \frac{(\theta^6+a\theta^3+1)\theta^2}{3}\}$ is a $3$-integral basis of $K$.
\item[(iii)] When $27 \mid (a^{2}-1)$, then $v_{3}(I_K(\theta)) = 4$ and the set $\{1, \theta, \theta^2, \cdots, \theta^5, \frac{\theta^6+a\theta^3+1}{3},$ $  \frac{(\theta^6+a\theta^3+1)\theta}{3},  \frac{\theta^8+\sum\limits_{j=0}^{7}(a\theta)^{j}}{9}\}$ is a $3$-integral basis of $K$.
\end{itemize}
\end{itemize}
\end{theorem}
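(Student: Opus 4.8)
The plan is to build everything on the classical index--discriminant relation $\operatorname{disc}(f)=I_K(\theta)^2\,d_K$, where $d_K$ is the discriminant of $K$. For $f(x)=x^9-a$ one has $\operatorname{disc}(f)=\pm 9^9a^8=\pm 3^{18}a^8$, so a prime can divide $I_K(\theta)$ only if it divides $3a$. This already disposes of part~(3): when $q\nmid 3a$ we get $v_q(\operatorname{disc} f)=0$, hence $v_q(I_K(\theta))=0$ and $\{1,\theta,\dots,\theta^8\}$ is a $q$-integral basis. For every remaining prime I would work one prime at a time, using the $\phi$-Newton polygon of $f$ together with the index formula recorded in Section~3, and read both the exponent $v_p(I_K(\theta))$ and the $p$-integral basis off the polygon.

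For part~(1), since $q\mid a$ we have $f\equiv x^9\pmod q$, so I take $\phi=x$ and form the $q$-Newton polygon: it is the single segment joining $(0,v_q(a))$ to $(9,0)$ of slope $-v_q(a)/9$. Writing $d=\gcd(9,v_q(a))$, the attached residual polynomial has the shape $y^{d}-\bar u$ with $\bar u\in\F_q^{*}$, and the hypothesis $q\nmid d$ forces it to be separable; hence the segment is regular and the index equals the number of lattice points lying strictly below it with positive ordinate. A direct count (equivalently Pick's theorem) gives $\sum_{i=0}^{8}\lfloor iv_q(a)/9\rfloor=\tfrac{8(v_q(a)-1)+d-1}{2}$, the stated value. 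The same polygon produces the claimed basis: each $\theta^i/q^{\lfloor iv_q(a)/9\rfloor}$ has nonnegative valuation at every prime above $q$ (where the extension $w$ with $w(q)=1$ satisfies $w(\theta)=v_q(a)/9$), and these nine elements are independent with index exactly $q^{\sum\lfloor iv_q(a)/9\rfloor}$ over $\Z[\theta]$, so regularity guarantees they span $S_{(q)}$.

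The real work is at $p=3$, where $9=3^2$ makes the prime wildly ramified and first-order polygon data need not suffice. For part~(4), with $3\nmid a$, I use $f\equiv(x-a)^9\equiv(x\mp 1)^9\pmod 3$ and expand $f$ in powers of $\phi=x\mp 1$. The constant term becomes $(\pm 1)^9-a$, of valuation $m:=v_3(a^2-1)$, while the inner coefficients carry the fixed valuations $v_3\binom{9}{i}$; the three subcases of part~(4) are exactly the three shapes the lower hull assumes as $m$ grows. For $m=1$ and $m=2$ the polygon is regular and the lattice-point count returns $v_3(I_K(\theta))=0$ and $3$ respectively, with the displayed bases. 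For $m\ge 3$ the leading segment is no longer separable, so the naive count overshoots; here the index stabilises at $4$, and one must either pass to a second-order (Montes) polygon or, more cheaply, verify directly that the exhibited element with denominator $9$ is integral (via a valuation estimate at the prime above $3$) and that the resulting order has $3$-index $4$, equivalently $v_3(d_K)=18-8=10$.

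Finally, for part~(2) I would exploit the tower hidden in the equation: setting $\eta=\theta^3/3^{c}$ gives $\eta^3=b$ with $3\nmid b$, so $K$ contains the pure cubic field $\Q(\sqrt[3]{b})$ whose ramification at $3$ is governed classically by $b\bmod 9$. The split $k\in\{1,8\}$ versus $k\in\{2,4,5,7\}$ reflects whether $b$ is a cube modulo $9$ (the residues $\{1,8\}$ are precisely the nonzero cubes), which is why these cases receive the closed-form $q_1,q_2$ while the remaining ones are tabulated. Lifting the cubic ramification data through the degree-three extension $K/\Q(\sqrt[3]{b})$ and combining with $v_3(\operatorname{disc} f)=18+24c$ yields $v_3(d_K)\in\{16,18\}$ and hence $v_3(I_K(\theta))\in\{12c,12c+1\}$ as stated. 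I expect the main obstacle to be precisely this $3$-adic bookkeeping: constructing $q_1(\theta),q_2(\theta)$ so that the quotients $q_j(\theta)/3^{m_{\bullet}}$ are simultaneously integral and maximal, and verifying the exponents $m_1,\dots,m_7$ of Tables~\ref{tab:1}--\ref{tab:2} case by case, since the wild ramification means these cannot be read off a single Newton polygon but require the refined valuation estimates supplied in Section~3.
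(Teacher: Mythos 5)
Parts (1) and (3) of your proposal are correct and essentially identical to the paper's treatment (Lemma~\ref{as1} and the relation $d_KI_K(\theta)^2=9^9a^8$). The first genuine error is in part (4)(iii). Write $m=v_3(a^2-1)\geq 3$ and expand $f$ around the shift as you propose; the $3$-Newton polygon of the shifted polynomial is the lower hull of $(0,m),(1,2),(3,1),(9,0)$. Its leading edge joins $(0,m)$ to $(1,2)$, so it has horizontal length one and its residual polynomial is linear, hence separable; the other two edges (slopes $-1/2$ and $-1/6$) also have degree one. Thus the shifted polynomial is $3$-regular for \emph{every} $m\geq 3$, Ore's theorem (Theorem~\ref{2.3}) applies, and the lattice-point count is exactly $\#\{(1,1),(1,2),(2,1),(3,1)\}=4$, independently of $m$. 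Your claim that ``the leading segment is no longer separable, so the naive count overshoots'' is false on both counts, and it pushes you to an unsupported fallback: verifying that $\big(\theta^8+\sum_{j=0}^{7}(a\theta)^j\big)/9$ is integral gives only the lower bound $v_3(I_K(\theta))\geq 4$, while the equality (equivalently $v_3(d_K)=10$, i.e.\ maximality of the order you construct) is the actual content, and you offer no way to prove it. The paper proves it precisely by the regular first-order polygon you abandoned, and then separately verifies integrality of the displayed elements (which do not come straight off the polygon, being written in $\theta$ rather than in the shifted generator) before invoking Proposition~\ref{p1}.

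Part (2), the heart of the theorem, is not proved at all: it is a plan whose guiding heuristic points at the wrong dichotomy. Whether $b$ is a cube modulo $9$ (i.e.\ $k\in\{1,8\}$) governs only which key polynomial and closed form for $q_1,q_2$ the paper can use; the index dichotomy in the statement is $k\in\{4,5\}$ (index $12c$) versus $k\in\{1,2,7,8\}$ (index $12c+1$). Your tower $\Q\subset\Q(\sqrt[3]{b})\subset K$ cannot by itself detect this split: the cubic subfield has $v_3(\mathrm{disc})=3$ for all $k\in\{2,4,5,7\}$, so it treats $k\in\{2,7\}$ and $k\in\{4,5\}$ identically, whereas the theorem separates them. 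Any tower argument must therefore compute the relative discriminant above $3$ of the wildly ramified cubic extension $K/\Q(\sqrt[3]{b})$, and you give no method for doing so; asserting that it ``yields $v_3(d_K)\in\{16,18\}$ \dots as stated'' merely restates the theorem. Likewise the exponents $m_1,\dots,m_7$ and the integrality and maximality of the nine displayed elements are deferred wholesale. The paper's Theorem~\ref{as2} supplies exactly this missing work: since the first-order residual polynomial is $(Y-\overline{b})^3$, it constructs case-dependent key polynomials ($x^3-3^cb$ for $k\in\{1,8\}$; $x^3-(-1)^k3bx-3b$ when $c=1$ and $x^3-(-1)^{\lfloor k/2\rfloor}3bx^2-9b$ when $c=2$ for $k\in\{2,4,5,7\}$), computes the second-order $V$-Newton polygon of the $\phi$-adic expansion, checks $V$-regularity, and reads off both $v_3(I_K(\theta))=N_1+N_2$ and the basis through the quotients via Theorem~\ref{index}. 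Nothing in your proposal substitutes for this computation.
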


\subsection{Construction of an explicit integral basis from $p$-integral basis}

If $L=\Q(\eta)$ is an algebraic number field of degree $n$ with $\eta$ an algebraic integer, then as is well known that there exists an integral basis $\mathcal B:=\{\beta_0,\cdots ,\beta_{n-1}\}$ of $L$ such that $\beta_0=1$, $$\beta_i=\frac{a_{i,0}+a_{i,1}\eta+\cdots +a_{i,i-1}\eta^{i-1}+\eta^i}{d_i}$$ with $a_{i,j},~d_i\in \Z$ and the positive integer $d_i$ dividing $d_{i+1}$ for $1\leq i\leq n-1$; moreover the numbers $d_i$ are uniquely determined by $\eta$ and the index  $[A_L : \Z[\eta]]$ to be denoted by $I_L(\eta)$ equals $\displaystyle\prod_{i=1}^{n-1}d_i$  (cf. \cite[Chapter 2, Theorem 13]{Mar}). Fix a prime $p$ and let $l_i$ denote its highest power dividing $d_i$, then $v_p(I_L(\eta))=l_1+l_2+\cdots +l_{n-1}.$ Since $\mathcal B$ is a $p$-integral basis of $L$, so is $\mathcal B^*:=\{1,\frac{\beta_1d_1}{p^{l_1}},\cdots ,\frac{\beta_{n-1}d_{n-1}}{p^{l_{n-1}}}\}$. It can be easily seen that if $\mathcal C=\{1,\gamma_1,\cdots ,\gamma_{n-1}\}$ is another $p$-integral basis of $L$ where $\gamma_i$'$s$ are of the type $$\gamma_i=\frac{c_{i,0}+c_{i,1}\eta+\cdots +c_{i,i-1}\eta^{i-1}+\eta^i}{p^{k_i}} $$ with $c_{i,j},~k_i$ in $ \Z$ for $1\leq i\leq n-1$, then on writing each member of $\mathcal B^*$ as a $\Z_{(p)}$-linear combination of members of $\mathcal C$ and vice versa, we see that $l_i=k_i~ \forall~ i$ and hence 
\begin{equation}\label{eqn p2}
v_p(I_L(\eta))=l_1+l_2+\cdots +l_{n-1}=k_1+k_2+\cdots +k_{n-1}.
\end{equation} In fact such a $p$-integral basis of an algebraic number field $L$ is easier to construct than constructing an integral basis of $L$ and these $p$-integral bases of $L$ with $p$ running over all primes dividing $I_L(\eta)$, quickly lead to a construction of an integral basis of $L$ as follows.

  Let $q_1, q_2, \cdots, q_r$ be all the distinct primes dividing the index $I_L(\eta).$ Let $\mathcal{B}_i = \{\alpha_{i0}, \alpha_{i1},$ $\cdots, \alpha_{i(n-1)}\}$ be a $q_i$-integral basis of $L$, $1\leq i\leq r$ with $$\alpha_{i0} = 1,~ \alpha_{ij} = \frac{\eta^j + \sum\limits_{s=0}^{j-1}c_{js}^{(i)}\eta^{s}}{q_i^{k_{i,j}}}, ~~1\leq j\leq n-1,$$ where $c_{js}^{(i)}$ and $0 \leq k_{i,j} \leq k_{i, j+1}$ are integers. Denote $\delta_{ij}$ belonging to $\Z[\eta]$ by $\delta_{ij} = \sum\limits_{s=0}^{j-1}c_{js}^{(i)}\eta^{s}.$ For a fixed $j$, $1\leq j\leq n-1$, let $z_{ij} = \prod\limits_{\ell=1,\ell\neq i}^{r}q_{\ell}^{k_{\ell,j}}$ for $1\leq i\leq r$. Since $\gcd\{z_{ij} | 1\leq i\leq r\} = 1$, there exist integers $u_{ij}$ such that $\sum\limits_{i=1}^{r}u_{ij}z_{ij} = 1$. Let $\beta_j$ equal $\sum\limits_{i=1}^{r}u_{ij}z_{ij}\delta_{ij}$.

With the above notations, we state the following theorem, which is proved in \cite[Theorem 2.2]{Jakhar}. This will be used to construct an explicit integral basis.
\begin{theorem}\label{main2}
Let $L = \Q(\eta), q_i,  \mathcal{B}_i, \alpha_{ij}, k_{i,j}, z_{ij}$ and $\beta_j$ be as in the above paragraph. Then $\Bigg\{1, \dfrac{\eta^j + \beta_j}{\prod\limits_{\ell=1}^{r}q_{\ell}^{k_{\ell,j}} }~\bigg\vert~1\leq j\leq n-1\Bigg\}$ is an integral basis of $L$.
\end{theorem}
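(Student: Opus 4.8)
The plan is to prove the statement by a sandwich argument: I will show that the proposed set $\mathcal B$ lies inside the ring of integers $A_L$ and that the $\Z$-module $M$ it generates has the same index over $\Z[\eta]$ as $A_L$ does. Since $\Z[\eta]\subseteq M\subseteq A_L$ with $[M:\Z[\eta]]=[A_L:\Z[\eta]]$ finite, this forces $M=A_L$, and as the $n$ displayed elements are $\Q$-linearly independent, they form an integral basis. Throughout, write $\gamma_0=1$ and $\gamma_j=\dfrac{\eta^j+\beta_j}{d_j}$ with $d_j=\prod_{\ell=1}^{r}q_\ell^{k_{\ell,j}}$ for $1\le j\le n-1$.

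The crucial step is to recognize each $\gamma_j$ as an integral combination of the local basis elements. Using $z_{ij}=d_j/q_i^{k_{i,j}}$ and the Bezout relation $\sum_{i=1}^{r}u_{ij}z_{ij}=1$, I would rewrite the numerator as $\eta^j+\beta_j=\big(\sum_{i}u_{ij}z_{ij}\big)\eta^j+\sum_{i}u_{ij}z_{ij}\delta_{ij}=\sum_{i=1}^{r}u_{ij}z_{ij}(\eta^j+\delta_{ij})$, and then divide by $d_j$, using $z_{ij}/d_j=q_i^{-k_{i,j}}$, to obtain $\gamma_j=\sum_{i=1}^{r}u_{ij}\,\dfrac{\eta^j+\delta_{ij}}{q_i^{k_{i,j}}}=\sum_{i=1}^{r}u_{ij}\alpha_{ij}$. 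Since the $u_{ij}$ are integers, it then remains to check that each $\alpha_{ij}$ is an algebraic integer. Here I would invoke the local description $A_L=\bigcap_p S_{(p)}$: for $p\neq q_i$ the denominator $q_i^{k_{i,j}}$ is a unit in $\Z_{(p)}$, so $\alpha_{ij}\in\Z_{(p)}[\eta]\subseteq S_{(p)}$, while $\alpha_{ij}\in S_{(q_i)}$ because it is a member of the $q_i$-integral basis $\mathcal B_i$; hence $\alpha_{ij}\in A_L$, and therefore $\gamma_j\in A_L$.

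For the index, note that each $\beta_j$ has $\eta$-degree less than $j$, so the transition matrix from $\{1,\eta,\dots,\eta^{n-1}\}$ to $\mathcal B$ is lower triangular with diagonal entries $1,1/d_1,\dots,1/d_{n-1}$; in particular (by the triangular shape) $\Z[\eta]\subseteq M$ and $[M:\Z[\eta]]=\prod_{j=1}^{n-1}d_j=\prod_{\ell=1}^{r}q_\ell^{\sum_{j}k_{\ell,j}}$. On the other hand, since $\mathcal B_\ell$ is a $q_\ell$-integral basis, identity \eqref{eqn p2} gives $v_{q_\ell}(I_L(\eta))=\sum_{j=1}^{n-1}k_{\ell,j}$, and because $q_1,\dots,q_r$ are precisely the primes dividing $I_L(\eta)$ we obtain $[A_L:\Z[\eta]]=\prod_{\ell=1}^{r}q_\ell^{v_{q_\ell}(I_L(\eta))}=\prod_{\ell=1}^{r}q_\ell^{\sum_{j}k_{\ell,j}}$. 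The two indices coincide, which closes the sandwich and yields $M=A_L$.

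I expect the only genuinely delicate point to be the identity $\gamma_j=\sum_i u_{ij}\alpha_{ij}$ together with the passage from ``integral over $\Z_{(q_i)}$'' to ``integral over $\Z$'' for the $\alpha_{ij}$; the combinatorial bookkeeping behind $z_{ij}$, $u_{ij}$ and the Bezout relation is exactly what makes a single global denominator $d_j$ absorb all the local denominators simultaneously. Once this is set up and the local characterization $A_L=\bigcap_p S_{(p)}$ is in place, the index comparison is routine, relying only on the triangular shape of $\mathcal B$ and on \eqref{eqn p2}.
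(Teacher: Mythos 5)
Your proof is correct, but a comparison with ``the paper's own proof'' is not possible in the strict sense: the paper does not prove Theorem \ref{main2} at all, it quotes it from \cite[Theorem 2.2]{Jakhar}. Judged on its own, your argument is complete and uses only facts established in this paper. The Bezout decomposition $\gamma_j=\sum_{i=1}^{r}u_{ij}\alpha_{ij}$ is exactly the right way to see integrality, and you correctly isolate the one delicate point: a member $\alpha_{ij}$ of the $q_i$-integral basis $\mathcal B_i$ is a priori only $q_i$-integral, but because its numerator lies in $\Z[\eta]$ and its denominator is a power of the single prime $q_i$, it lies in $S_{(p)}$ for every prime $p$, hence in $A_L=\bigcap_p S_{(p)}$ --- this is what turns local data into a global statement. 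The index count is also sound: the triangular shape gives $\Z[\eta]\subseteq M\subseteq A_L$ and $[M:\Z[\eta]]=\prod_{\ell=1}^{r} q_\ell^{\sum_j k_{\ell,j}}$, while \eqref{eqn p2} applies to each $\mathcal B_\ell$ (these have precisely the triangular, prime-power-denominator form demanded there), giving $[A_L:\Z[\eta]]=\prod_{\ell=1}^{r} q_\ell^{\sum_j k_{\ell,j}}$ since $q_1,\dots,q_r$ exhaust the primes dividing $I_L(\eta)$; multiplicativity of indices then forces $M=A_L$. Two cosmetic points, neither a gap: you should say in one line why $\bigcap_p S_{(p)}=A_L$ (for $x$ in the intersection, the ideal $\{m\in\Z: mx\in A_L\}$ is contained in no $p\Z$, hence equals $\Z$), and note explicitly that $M$ is free of rank $n$ (the $\gamma_j$ are $\Q$-independent by the triangular shape) so that the determinant computation of $[M:\Z[\eta]]$ makes sense.
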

The following corollary will be quickly deduced using Theorems \ref{main} and \ref{main2}. %This extends the main result of \cite{ANH2}.
\begin{corollary} \label{main3}
Let $K = \Q(\theta)$ with $\theta$ a root of an irreducible polynomial $x^{9}-a$, where $a$ is a squarefree integer such that $9\nmid (a^2-1).$ Then $\{1, \theta, \theta^2, \cdots, \theta^{8}\}$ is an integral basis of $K$.
\end{corollary}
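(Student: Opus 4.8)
The plan is to deduce the result directly from Theorem~\ref{main} by showing that the squarefreeness of $a$ together with the hypothesis $9 \nmid (a^2-1)$ forces $v_p(I_K(\theta)) = 0$ for \emph{every} prime $p$. Once this is established, $I_K(\theta) = \prod_p p^{v_p(I_K(\theta))} = 1$, so that $\Z[\theta] = \Z_K$ and the power basis $\{1, \theta, \ldots, \theta^8\}$ is automatically an integral basis of $K$.

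First I would reduce the claim to this vanishing of all $p$-adic valuations of the index, which is just the observation that a power basis is an integral basis exactly when the index equals $1$. It then suffices to run through the mutually exclusive cases of Theorem~\ref{main} and verify that each contributes exponent zero. The engine throughout is that $a$ squarefree gives $v_q(a) \le 1$ for every prime $q$.

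Next I would handle the prime divisors of $a$ using part (1). For such a prime $q$ we have $v_q(a) = 1$, hence $\gcd(9, v_q(a)) = 1$, and since $q \ge 2$ we have $q \nmid 1 = \gcd(9, v_q(a))$; thus part (1) applies and yields
$$v_q(I_K(\theta)) = \frac{8(v_q(a)-1) + \gcd(9, v_q(a)) - 1}{2} = \frac{0 + 0}{2} = 0.$$
This already disposes of $q = 3$ in the case $3 \mid a$. Moreover, because $a$ is squarefree, $3 \mid a$ forces $v_3(a) = 1$, so $3 \nmid v_3(a)$; consequently the hypothesis of part (2), which demands that $3$ divide both $a$ and $v_3(a)$, is never satisfied, and part (2) plays no role at all.

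It then remains only to treat $q = 3$ when $3 \nmid a$, and primes $q \nmid 3a$. In the first situation part (4) governs, and here is precisely where the assumption $9 \nmid (a^2-1)$ enters: it places us in subcase (i), giving $v_3(I_K(\theta)) = 0$ and excluding the strictly positive contributions of subcases (ii) and (iii). In the second situation part (3) gives $v_q(I_K(\theta)) = 0$ immediately. Assembling all cases, $v_p(I_K(\theta)) = 0$ for every prime $p$, so $I_K(\theta) = 1$ and $\{1, \theta, \ldots, \theta^8\}$ is an integral basis; equivalently, each $p$-integral basis supplied by Theorem~\ref{main} is itself the power basis, and feeding these into Theorem~\ref{main2} returns the same set. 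There is no genuine obstacle in this argument: the only points requiring care are confirming that squarefreeness eliminates part (2) and that $9 \nmid (a^2-1)$ selects subcase (i) of part (4), both of which are immediate.
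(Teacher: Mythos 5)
Your proposal is correct, and its case analysis coincides exactly with the paper's: parts (1), (3) and (4)(i) of Theorem~\ref{main} cover all primes, squarefreeness rules out part (2) (since $3\mid a$ forces $v_3(a)=1$, so $3\nmid v_3(a)$), and the hypothesis $9\nmid(a^2-1)$ selects subcase (i) of part (4). The only difference lies in the concluding step. The paper reads off from those same assertions that $\{1,\theta,\dots,\theta^{8}\}$ is a $q$-integral basis for \emph{every} prime $q$ (in part (1) with $v_q(a)=1$ the denominators $q^{\lfloor i/9\rfloor}$, $0\leq i\leq 8$, are all equal to $1$) and then invokes Theorem~\ref{main2} to pass from the local bases to an integral basis. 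You instead read off the index formulas, conclude $v_p(I_K(\theta))=0$ for all $p$, hence $I_K(\theta)=1$ and $\Z_K=\Z[\theta]$. Your route is marginally more self-contained: the gluing machinery of Theorem~\ref{main2} is only genuinely needed when some local index is nontrivial, and here none is, so the elementary fact that index $1$ means $\Z[\theta]=\Z_K$ suffices. Both arguments are valid and of the same length; the paper's formulation presents the corollary as an instance of its general basis-assembly procedure, while yours makes explicit that in this case there is nothing to assemble.
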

\subsection{Examples} The following examples are applications of Theorems \ref{main}, \ref{main2}. In these examples, $K = \Q(\theta)$ with $\theta$ a root of $f(x)$ and $d_K$ denotes the discriminant of $K$.  
\begin{example}
Let $f(x) = x^9 - 54$. In view of assertions (1), (2), (3) of Theorem \ref{main}, only the prime $3$ divides the index $I_K(\theta)$. Using Theorem \ref{main}(2), it can be easily checked that $v_3(I_K(\theta)) = 13$ and the set $$S= \{ 1,\theta, \theta^2, \frac{q_2(\theta)}{3}, \frac{\theta q_2(\theta)}{3}, \frac{\theta^2 q_2(\theta)}{9},\frac{q_1(\theta)}{27}, \frac{\theta q_1(\theta)}{27}, \frac{\theta^2 q_1(\theta)}{27}   \}.$$ is a $3$-integral basis of $K$ where $q_2(\theta) = \theta^3 +12\theta +12$ and $q_1(\theta)= \theta^6 +6\theta^4 + 6\theta^3 + 36\theta^2 + 72\theta+ 252$. 
By Theorem \ref{main2}, it follows that $$S= \{ 1,\theta, \theta^2, \frac{q_2(\theta)}{3}, \frac{\theta q_2(\theta)}{3}, \frac{\theta^2 q_2(\theta)}{9},\frac{q_1(\theta)}{27}, \frac{\theta q_1(\theta)}{27}, \frac{\theta^2 q_1(\theta)}{27}   \}.$$ is an integral basis of $K$.
Note that $discr(f(x)) = 9^9\cdot 54^{8}= 2^8\cdot 3^{42}$. Keeping in mind that $I_K(\theta)= 3^{13}$ and the fact that $d_K I_K(\theta)^2= discr(f(x))$, we get $d_K= 2^8\cdot 3^{16}$.
\end{example}
\begin{example}
Let $f(x) = x^{9} - 108.$ Applying assertions (1), (2), (3) of Theorem \ref{main}, we see that only the primes $2$ and $3$ divide $I_K(\theta)$. By Theorem \ref{main}(2), it follows that $v_3(I_K(\theta)) = 12$ and the set $$S= \{ 1,\theta, \theta^2, \frac{q_2(\theta)}{3}, \frac{\theta q_2(\theta)}{3}, \frac{\theta^2 q_2(\theta)}{9},\frac{q_1(\theta)}{9}, \frac{\theta q_1(\theta)}{27}, \frac{\theta^2 q_1(\theta)}{27}   \}.$$ is a $3$-integral basis of $K$ where $ q_2(\theta) = \theta^3 +24\theta +24$ and $q_1(\theta)= \theta^6 +12\theta^4 + 12 \theta^3 +144\theta^2  +288\theta + 1872$. Applying Theorem \ref{main}(1), we see that $v_2(I_K(\theta)) = 5$ and a  $2$-integral basis of $K$ is given by $$S= \{ 1,\theta, \theta^2, \theta^3,\frac{\theta^4}{2}, \frac{\theta^5}{2}, \frac{\theta^6}{2},\frac{\theta^7}{2}, \frac{\theta^8}{2}  \}.$$

As in the notations in the preceding paragraph of Theorem \ref{main2}, we see that $\delta_{1j} = 0$ for $1\leq j\leq 8$ and $\delta_{2j} = 0$ for $1\leq j\leq 2$, $\delta_{23} =24\theta+24 , \delta_{24}= 24\theta^2+24\theta, \delta_{25}= 24\theta^3+24\theta^2,\delta_{26} =12\theta^4 + 12 \theta^3 +144\theta^2  +288\theta + 1872,    
 \delta_{27} =12\theta^5 + 12 \theta^4 +144\theta^3  +288\theta^2 + 1872\theta,$ and $ \delta_{28} =12\theta^6 + 12 \theta^5 +144\theta^4  +288\theta^3 + 1872\theta^2$. Also, $k_{1,j} = 0$ for $1\leq j\leq 3$, $k_{1,j} = 1$ for $4\leq j\leq 8$, and $k_{2,j} = 0$ for $j=1,2$, $k_{2,j} = 1,$ for $j=3,4$, $k_{2,j}=2 $ for $j=5,6$, $k_{2,j}=3$ for $7\leq j\leq 8.$ So, $z_{1j} = 1$ for $j=1,2$, $z_{13} = z_{14} = 3$, $z_{1j}=9$ for $j=5,6$, $z_{1j}=27$ for $j=7,8$  and $z_{2j} = 1$ for $1\leq j\leq 3$, $z_{24}= z_{25} = z_{26} = z_{27} = z_{28} = 2.$ Hence, we can choose $u_{1j}  = 1$ for $3\leq j\leq 8$ and $u_{23} =-2, u_{24}= -1, u_{25}=u_{26}= -4 , u_{27}=u_{28}=-13$. Therefore, we see that $\beta_j = 0$ for $j=1,2$, $\beta_3= -48\theta -48 ,\beta_4 =-24\theta^2 -24\theta , \beta_5 = -96\theta^3 -96\theta^2, \beta_6 = -8(12\theta^4 + 12 \theta^3 +144\theta^2  +288\theta + 1872), \beta_7 = -26(12\theta^4 + 12 \theta^3 +144\theta^2  +288\theta + 1872)$ and $\beta_8 = -26(12\theta^4 + 12 \theta^3 +144\theta^2  +288\theta + 1872)$. So, it follows from Theorem \ref{main2} that the set $\{1, \theta, \theta^2, \frac{\theta^3+\beta_3}{3}, \frac{\theta^4+\beta_4}{6}, \frac{\theta^5 +\beta_5}{18}, \frac{\theta^6 +\beta_6\theta}{54}, \frac{\theta^7 +\beta_7}{54}, \frac{\theta^8 +\beta_8}{54}\}$ is an integral basis of $K$. Note that $discr(f(x)) =  9^9\cdot 108^{8}= 2^{16}\cdot 3^{42}$. Keeping in mind that $v_2(I_K(\theta))= 5 ,v_3(I_K(\theta))=12$ and the fact that $d_K I_K(\theta)^2= discr(f(x))$, we get $d_K= 2^{6}\cdot 3^{18}$.

\end{example}

 \section{Preliminary results}\label{prelim}
 The following proposition to be used in the sequel follows immediately from what has been
said in the first paragraph of Section 2.1.
\begin{proposition}\label{p1} Let $L=\Q(\eta)$ be an algebraic number field of degree $n$ with $\eta$ an algebraic integer and $p$ be a prime number. Let  $\{\alpha_1,\alpha_2,\cdots ,\alpha_{n-1}\}$ be $p$-integral elements of $L$ of the type  $\alpha_i=\frac{c_{i,0}+c_{i,1}\eta+\cdots +c_{i,i-1}\eta^{i-1}+\eta^i}{p^{k_i}}$ where $c_{i,j},~k_i$ are in $\Z$ with $0\leq k_i\leq k_{i+1}$ for $1\leq i\leq n-1$. Then  $\{1,\alpha_1,\cdots ,\alpha_{n-1}\}$ is a $p$-integral basis of $L$ if and only if    $v_p(I_L(\eta))=\displaystyle\sum_{i=1}^{n-1}k_i$, in which case  the integers $k_1,~\cdots,k_{n-1}$ are uniquely determined by  the prime $p$ and the element $\eta$ of $L$. Moreover there always exists a $p$-integral basis of $L$ of the above type.
\end{proposition}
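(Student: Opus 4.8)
The plan is to reduce the entire statement to one index computation inside the discrete valuation ring $\Z_{(p)}$. Write $S_{(p)}$ for the integral closure of $\Z_{(p)}$ in $L$; by Section 2.1 it is a free $\Z_{(p)}$-module of rank $n$ with the canonical basis $\mathcal B^* = \{1, \mu_1, \dots, \mu_{n-1}\}$, where $\mu_i = \beta_i d_i / p^{l_i}$ has the triangular shape $\mu_i = (a_{i,0} + a_{i,1}\eta + \cdots + \eta^i)/p^{l_i}$ with integer $a_{i,j}$, with $0 \le l_1 \le \cdots \le l_{n-1}$, and with $\sum_i l_i = v_p(I_L(\eta))$. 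This already settles the final ``moreover'' clause, since $\mathcal B^*$ is a $p$-integral basis of exactly the type described. For the rest, let $M$ be the $\Z_{(p)}$-submodule of $L$ generated by $\{1, \alpha_1, \dots, \alpha_{n-1}\}$; the $p$-integrality hypothesis gives $\Z_{(p)}[\eta] \subseteq M \subseteq S_{(p)}$, and the triangular form of the $\alpha_i$ makes $\{1,\alpha_1,\dots,\alpha_{n-1}\}$ linearly independent over $\Q$, so $M$ is free of rank $n$.

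For the equivalence I would compute $[M : \Z_{(p)}[\eta]]$ directly. Expressing $1, \alpha_1, \dots, \alpha_{n-1}$ in the basis $1, \eta, \dots, \eta^{n-1}$ of $\Z_{(p)}[\eta]$ gives a lower-triangular transition matrix with diagonal entries $1, p^{-k_1}, \dots, p^{-k_{n-1}}$, so $M/\Z_{(p)}[\eta] \cong \bigoplus_i \Z_{(p)}/p^{k_i}\Z_{(p)}$ and hence $[M : \Z_{(p)}[\eta]] = p^{\sum_i k_i}$. Applying the same triangular-determinant argument to $\mathcal B^*$ gives $[S_{(p)} : \Z_{(p)}[\eta]] = p^{\sum_i l_i} = p^{v_p(I_L(\eta))}$. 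Feeding both into the tower law
$$[S_{(p)} : \Z_{(p)}[\eta]] = [S_{(p)} : M]\,[M : \Z_{(p)}[\eta]]$$
yields $[S_{(p)} : M] = p^{\,v_p(I_L(\eta)) - \sum_i k_i}$. Since $M \subseteq S_{(p)}$ are both free of rank $n$, the set $\{1, \alpha_1, \dots, \alpha_{n-1}\}$ is a $p$-integral basis precisely when $M = S_{(p)}$, that is, when the displayed index equals $1$, that is, when $\sum_i k_i = v_p(I_L(\eta))$. This establishes both implications simultaneously.

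For uniqueness I would invoke the triangular comparison of Section 2.1. When $\{1, \alpha_1, \dots, \alpha_{n-1}\}$ is a $p$-integral basis, it and $\mathcal B^*$ are two $\Z_{(p)}$-bases of $S_{(p)}$ in triangular form with leading terms $\eta^i/p^{k_i}$ and $\eta^i/p^{l_i}$ respectively. Writing $\alpha_i$ as a $\Z_{(p)}$-combination of the $\mu_j$, only $\mu_1,\dots,\mu_i$ can occur (higher $\mu_j$ are excluded by degree), and reading off the coefficient of $\eta^i$ forces the diagonal entry $p^{l_i-k_i}$ to lie in $\Z_{(p)}$; doing the same in the reverse direction forces $p^{k_i-l_i}\in\Z_{(p)}$. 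Hence $k_i = l_i$ for every $i$, and since the $l_i$ are the $p$-adic valuations of the invariants $d_i$, which Section 2.1 records as depending only on $\eta$, the $k_i$ are pinned down by $p$ and $\eta$ alone.

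The only genuinely delicate point is this last step: the index computation by itself gives merely the global equality $\sum_i k_i = \sum_i l_i$, and upgrading it to the pointwise equalities $k_i = l_i$ needed for uniqueness requires the leading-coefficient comparison rather than a naive count. Everything else is routine bookkeeping over the discrete valuation ring $\Z_{(p)}$, so I anticipate no further obstacle.
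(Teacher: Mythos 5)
Your proof is correct, but the route you take for the core equivalence is organized differently from the paper's. The paper offers no written argument at all: it declares the proposition to ``follow immediately'' from the first paragraph of Section 2.1, and what that paragraph actually contains is (a) Marcus's canonical basis $\{\beta_i\}$ with invariants $d_i$, giving the triangular $p$-integral basis $\mathcal B^*$ with exponents $l_i=v_p(d_i)$ and $\sum_i l_i=v_p(I_L(\eta))$, and (b) the two-sided coefficient comparison showing that any two triangular $p$-integral bases have the same exponents. That supplies existence, uniqueness, and the ``only if'' direction; the ``if'' direction is left implicit (in the paper's spirit one would expand each $\alpha_i$ in $\mathcal B^*$, get $l_i\ge k_i$ from $p$-integrality plus triangularity, use the assumed equality of sums to force $k_i=l_i$, and conclude the transition matrix is triangular with unit diagonal, hence invertible over $\Z_{(p)}$). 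Your alternative — running both $M$ and $S_{(p)}$ against the common submodule $\Z_{(p)}[\eta]$ and invoking multiplicativity of the index, $[S_{(p)}:\Z_{(p)}[\eta]]=[S_{(p)}:M]\,[M:\Z_{(p)}[\eta]]$ — is a genuinely different decomposition: it settles both directions of the equivalence in one stroke and makes fully explicit exactly the step the paper glosses over, at the price of yielding only the sum identity, so that the pointwise equalities $k_i=l_i$ still require the comparison argument; you correctly isolate this as the delicate point and handle it the paper's way. One small repair: your claim $M/\Z_{(p)}[\eta]\cong\bigoplus_i \Z_{(p)}/p^{k_i}\Z_{(p)}$ does not follow from triangularity of the transition matrix alone, since the diagonal entries of a triangular matrix need not be its invariant factors (a lower-triangular matrix with diagonal $(p,p)$ and a unit below the diagonal has cyclic cokernel of order $p^2$); it does hold here because each column of the matrix expressing the $\eta^i$ in the $\alpha_j$ is divisible by the corresponding $p^{k_j}$, but you never need this — the determinant already gives $[M:\Z_{(p)}[\eta]]=p^{\sum_i k_i}$, which is all your argument uses.
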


Let $K = \Q(\theta)$ with $\theta$  a root of an irreducible polynomial $f(x) = x^{9} - a$, where $a$ is a $9$-th power-free integer. Let $a = \prod\limits_{i=1}^{8}a_i^i$, where $a_1, a_2, \cdots, a_{8}$ are relatively prime and squarefree integers.  With these, we now prove a simple lemma.
\begin{lemma}\label{1a}
Let $C_{k} = \prod\limits_{i=1}^{8}a_i^{\lfloor\frac{ik}{9}\rfloor}$ for $1\leq k \leq 8$, then $\frac{\theta^k}{C_k}$ is an algebraic integer.
\end{lemma}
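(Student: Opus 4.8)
The plan is to show directly that $\beta := \theta^k/C_k$ is a root of a monic polynomial with rational integer coefficients, which is the defining property of an algebraic integer. Since $\theta^9 = a$, the natural object to look at is the ninth power of $\beta$. First I would compute
\[
\beta^9 = \frac{(\theta^9)^k}{C_k^9} = \frac{a^k}{C_k^9},
\]
and then the whole lemma reduces to checking that this last quantity is an ordinary (rational) integer, for then $\beta$ satisfies the monic polynomial $x^9 - (a^k/C_k^9) \in \Z[x]$.

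The key step is the exponent bookkeeping. Using the decomposition $a = \prod_{i=1}^{8} a_i^i$ together with the definition $C_k = \prod_{i=1}^{8} a_i^{\lfloor ik/9\rfloor}$, I would rewrite
\[
\frac{a^k}{C_k^9} = \prod_{i=1}^{8} a_i^{\,ik - 9\lfloor ik/9\rfloor}.
\]
The point is that the floor function in the definition of $C_k$ is chosen precisely so that each exponent equals the least nonnegative residue $ik \bmod 9$, i.e. $ik - 9\lfloor ik/9\rfloor \geq 0$ and is an integer. Consequently every factor $a_i^{\,ik - 9\lfloor ik/9\rfloor}$ is a rational integer, and hence so is the product $a^k/C_k^9$.

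From here the conclusion is immediate: $\beta = \theta^k/C_k$ is a root of the monic polynomial $x^9 - (a^k/C_k^9)$ with integer coefficients, so it is an algebraic integer, as claimed. I do not expect any genuine obstacle; the only things to verify are the elementary identity $ik - 9\lfloor ik/9\rfloor = ik \bmod 9 \geq 0$ and the fact that $C_k \in \Z$. It is worth noting that the coprimality and squarefreeness of the $a_i$ play no role in this particular argument — they only serve to pin down the decomposition $a = \prod_i a_i^i$ canonically and will matter for the later, sharper index computations.
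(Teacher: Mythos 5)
Your proposal is correct and is essentially identical to the paper's own proof: the paper likewise observes that $\theta^{9}=\prod_{i=1}^{8}a_i^{i}$ and that $\frac{\theta^k}{C_k}$ satisfies the monic polynomial $x^{9}-\prod_{i=1}^{8}a_i^{\,ik-9\lfloor ik/9\rfloor}\in\Z[x]$, whose constant term is integral because each exponent $ik-9\lfloor ik/9\rfloor$ is a nonnegative integer. Your write-up simply makes explicit the exponent bookkeeping that the paper leaves as ``easily seen.''
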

\begin{proof}
Keeping in mind that $\theta^{9} = \prod\limits_{i=1}^{8}a_i^i$, it can be easily seen that the element $\frac{\theta^k}{C_k}$ satisfies the polynomial $x^{9} - \prod\limits_{i=1}^{8}a_i^{ik - 9\lfloor\frac{ik}{9}\rfloor}$. Hence it is an algebraic integer.
\end{proof}

Throughout $\Z_p$ denotes the ring of $p$-adic integers and $\F_p$ the field with $p$ elements. 
\subsection*{Newton polygons of first order, second order, and review of Montes' algorithm.} 
For  a prime number $p$ and $a$ in $\Z_p$, $v_p(a)$ stands for the $p$-adic valuation of $a$ defined by $v_p(p) = 1$ and $\bar{a}$ for the image of $a$ under the canonical homomorphism from $\Z_p$ onto $\F_p$. As in \cite{Jakhar}, we now state the following definitions and theorems.
\begin{definition} Let $p$ be a prime number and $g(x) = a_nx^n + \cdots + a_0$ be a polynomial over $\Z_p$ with $a_0a_n \neq 0$. To each non-zero term $a_ix^i$, we associate a point $(i, v_p(a_i))$ and form the set $P = \{(i, v_p(a_{i})) : 0\leq i\leq n, a_{i} \neq 0\}$. The Newton polygon of $g(x)$ with respect to $p$ (also called $p$-Newton polygon of $g(x)$ of first order) is the polygonal path formed by the lower edges along the convex hull of points of $P$. Note that the slopes of the edges are increasing when calculated from left to right.
\end{definition}
\begin{definition}\label{residue}
Let $g(x) = x^n + a_{n-1}x^{n-1} + \cdots + a_0$ be a polynomial over $\Z_p$ such that the $p$-Newton polygon of $g(x)$ consists of a single edge having negative slope $\lambda$, i.e., $\min\{\frac{v_p(a_{i})-v_p(a_0)}{i} : 1\leq i\leq n\} = -\frac{v_p(a_0)}{n} = \lambda.$ Let $e$ denote the smallest positive integer such that $e\lambda \in \Z$. Then we associate with $g(x)$ a polynomial $T(g)(Y) \in \F_{p}[Y]$ not divisible by $Y$ of degree $\frac{n}{e} = d$ (say) defined by  $$T(g)(Y) = Y^d + \sum\limits_{j=0}^{d-1}\overline{\bigg(\frac{a_{ej}}{p^{v_p(a_0)+ej\lambda}}\bigg)}Y^{j}.$$
To be more precise, $T(g)(Y)$ will be called the residual polynomial of $g(x)$  with respect to $p$. 
\end{definition}
\begin{example}
	Consider $g(x) = (x+5)^4-5$. One can see that the $2$-Newton polygon of $g(x)$ consists of only one edge having slope $\lambda = -\frac{1}{2}$. With notations as in the above definition, we have $e = 2, n = 4, d = 2$ and the residual polynomial of $g(x)$ with respect to $2$ is $T(g)(Y) = Y^2 + Y+ \bar{1}$ belonging to $\F_2[Y]$.
\end{example}
We now state the following weaker version of the theorem proved by Ore \cite{Ore} in a more general set up. Its proof is omitted.
\begin{theorem}\label{2.3}
Let $p$ be a prime number. Let $L = \Q(\beta)$ where $\beta$  is a root of a monic polynomial  $g(x) = a_nx^n + \cdots + a_0 \in \Z[x]$, $a_0\neq 0$ with $g(x) \equiv x^n$ mod $p$. Suppose that the $p$-Newton polygon of $g(x)$ of first order consists of a single edge with negative slope $\lambda$. If the residual polynomial $T(g)(Y) \in \F_p[Y]$ of $g(x)$   has no repeated roots, then the highest power of $p$ dividing the index of the subgroup $\Z[\beta]$ in $A_L$ equals the number of points with positive integer coordinates lying on or below the $p$-Newton polygon of $g(x)$.   
\end{theorem}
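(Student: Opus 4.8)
The plan is to obtain Theorem~\ref{2.3} as the ``single edge, separable residual polynomial'' special case of Ore's theory of Newton polygons, which is why its proof is quoted from \cite{Ore} rather than reproduced. The hypothesis $g(x)\equiv x^n\pmod p$ forces $\bar g$ to be a power of the single irreducible $\phi(x)=x$ over $\F_p$, so the $\phi$-adic expansion of $g$ is just its ordinary expansion in powers of $x$ and no preliminary factorisation of $\bar g$ intervenes; this is what keeps the whole argument at first order. First I would translate the assertion into a statement about discriminants: if $d_L$ denotes the discriminant of $L$, then $v_p(\operatorname{disc} g)=2\,v_p\big([A_L:\Z[\beta]]\big)+v_p(d_L)$, so it suffices to pin down these exponents. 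Passing to the completion, $g=\prod_i g_i$ over $\Q_p$ with each $g_i$ irreducible and $L_i=\Q_p[x]/(g_i)$ the associated local field, and one has
$$v_p(\operatorname{disc} g)=\sum_i v_p(\operatorname{disc} g_i)+2\sum_{i<j}v_p(\operatorname{Res}(g_i,g_j)),\qquad v_p(d_L)=\sum_i v_p(d_{L_i}).$$

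Next I would feed in the two structural theorems of Ore. The theorem of the polygon separates factors by slope; since our polygon is the single edge of slope $\lambda=-v_p(a_0)/n$, writing $\lambda=-h/e$ in lowest terms, every $g_i$ inherits a one-edge polygon of this slope. The theorem of the residual polynomial then attaches $T(g)(Y)\in\F_p[Y]$ of Definition~\ref{residue} and refines the factorisation of $g$ along that of $T(g)(Y)$: an irreducible factor $\psi(Y)$ of degree $f$ produces an irreducible $g_i$ of degree $ef$ whose local field $L_i$ has ramification index $e$ and residue degree $f$. The crucial role of the hypothesis that $T(g)(Y)$ has no repeated roots is that it makes $g$ \emph{$p$-regular}: each residual factor is simple, every $g_i$ is completely resolved at the first order, and no second-order Newton polygon is needed. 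In particular each $L_i$ is explicitly describable as an unramified extension of degree $f$ followed by a totally ramified degree-$e$ step encoded by $\psi$, which determines $v_p(d_{L_i})$ through its different.

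Finally I would invoke Ore's theorem of the index. Combining the two decompositions gives
$$v_p\big([A_L:\Z[\beta]]\big)=\sum_i v_p(\operatorname{ind} g_i)+\sum_{i<j}v_p(\operatorname{Res}(g_i,g_j)),\qquad v_p(\operatorname{ind} g_i)=\tfrac12\big(v_p(\operatorname{disc} g_i)-v_p(d_{L_i})\big),$$
and the substance of the proof is to show this sum equals $\operatorname{ind}(N)$, the number of points of $\Z^2$ with positive integer coordinates lying on or below the Newton polygon $N$. I expect the clean route to be: first prove the general inequality $v_p(\operatorname{ind} g)\ge\operatorname{ind}(N)$, which holds with no separability assumption and follows from estimating the $p$-adic sizes of the powers $1,\beta,\dots,\beta^{n-1}$ against the polygon; then prove equality under separability by checking that the two sources of index on the right---the minimal different of each regular $L_i$ and the cross resultants $v_p(\operatorname{Res}(g_i,g_j))$ between factors sharing the slope $\lambda$---assemble exactly into the lattice points beneath the single edge. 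The hard part is precisely this equality case: controlling the resultant cross-terms and verifying that $p$-regularity leaves no hidden contribution to the index is the heart of Ore's index theorem, and it is where the separability of $T(g)(Y)$ is indispensable.
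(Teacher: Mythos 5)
The paper gives no proof of this statement at all---it is stated as ``a weaker version of the theorem proved by Ore \ldots\ Its proof is omitted''---and your proposal correctly identifies and reconstructs exactly that route: Ore's theorems of the polygon, of the residual polynomial, and of the index, with separability of $T(g)(Y)$ entering as $p$-regularity to force equality in the index theorem, so your approach is essentially the paper's own. The only caution is your aside that the tower structure $(e,f)$ of each local field $L_i$ ``determines $v_p(d_{L_i})$ through its different'': that is literally true only for tame $e$, and in the wild case $p\mid e$ (which is the case relevant to this paper, e.g.\ $p=3$, $e=3$ or $9$) the different exponent must be read off from the polygon data itself rather than from $(e,f)$, which is precisely the content of Ore's index theorem that you rightly flag as the hard part.
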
 
\begin{definition}
Let $p, L = \Q(\beta), g(x), T(g)(Y)$ be as in the above theorem. If  the residual polynomial $T(g)(Y) \in \F_p[Y]$ of $g(x)$   has no repeated roots, then we say that $g(x)$ is $p$-regular. 
\end{definition}
Let $p, L = \Q(\beta), g(x), \lambda$ be as in Theorem \ref{2.3}. Let $\lambda = -\frac{\ell}{e}$ with $\gcd(\ell, e) = 1$ and $g(x)$ is not $p$-regular, i.e., the residual polynomial $T(g)(Y) \in \F_p[Y]$ of $g(x)$ has repeated roots. In this situation,  as in \cite{G-M-N}, we define the Newton polygon of second order. Let $T(g)(Y)$ is a power of  monic irreducibile polynomial $\psi(Y)$ over $\F_p$, i.e., $T(g)(Y) = \psi(Y)^{s}$ in $\F_p[Y]$, where $s \geq 2$. In this case,  we construct  a key polynomial $\phi(x)$ attached with the slope $\lambda$ such that the following hold:
\begin{itemize}
\item[(1)] $\phi(x)$ is congruent to a power of $x$ modulo $p$.
\item[(2)] The $p$-Newton polygon of $\phi(x)$ of first order is one-sided with slope $\lambda$.
\item[(3)] The residual polynomial of $\phi(x)$ with respect to $p$ is $\psi(Y)$ in $\F_p[Y]$.
\item[(4)] $\deg \phi(x) = e\deg \psi(Y).$
\end{itemize} 
As described in \cite[Section 2.2]{G-M-N}, the data $(x; \lambda, \psi(Y))$ determines a $p$-adic valuation $V$ of the field $\Q_p(x)$, which satisfies the following basic properties.
\begin{itemize}
\item[(1)] $V(x) = \ell$, where $\lambda = -\frac{\ell}{e}$  with $\gcd(\ell, e) = 1$.
\item[(2)] If $P(x) = \sum\limits_{0\leq i}b_ix^i \in \Z_p[x]$ is any polynomial, then 
\begin{equation}\label{valuation}
V(P(x)) = e\min_{0\leq i}\{v_p(b_i)+i |\lambda|\}.
\end{equation}
\end{itemize} 
We define the above valuation $V$ to the valuation of second order. We can use this valuation of second order to construct Newton polygons of second order of  any polynomial $g(x)$ belonging to $\Z_p[x]$ with respect to its $\phi$-adic expansion.

If $g(x) = \sum\limits_{i=0}^{m}a_i(x)\phi(x)^i \in \Z_p[x]$ is the $\phi$-adic expansion of $g(x)$, then the Newton polygon of $g(x)$ with respect to $V$ (also called $V$-Newton polygon of $g(x)$ of second order) is the lower convex hull of the set of points  $(i, V(a_i(x)\phi(x)^i))$ of the Euclidean plane.

Let the $V$-Newton polygon of $g(x)$ of second order has $k$-sides $E_1, \cdots, E_k$ with negative slopes $\lambda_1, \cdots, \lambda_k$. Let $\lambda_t = -\frac{\ell_t}{e_t}$ with $\gcd(\ell_t, e_t) = 1$ and $[a_t, b_t]$ denote the projection to the horizontal axis of the side of slope $\lambda_t$ for $1\leq t\leq k$. Then, there is a natural residual polynomial $T_t(Y)$ of second order attached to each side $E_t$, whose degree coincides with the degree of the side (i.e. $\frac{b_t-a_t}{e_t}$) \cite[Section 2.5]{G-M-N}. Only those integral points of the $V$-Newton polygon of $g(x)$ which lie on the side, determine a nonzero coefficient of this second order residual polynomial, and this is the only property we need in what follows. We define $f(x)$ to be $T_t$-regular when the second order residual polynomial $T_t(Y)$ attached to the side $E_t$ of the $V$-Newton polygon of $g(x)$ of second order is separable in $\F_p[Y]/\langle \psi(Y)\rangle$. We define $f(x)$ to be $V$-regular if $f(x)$ is $T_t$-regular for each $t$, $1\leq t\leq k$.\\

For computing $p$-integral basis and for the sake of completeness, we recall the following definition from the paper  \cite{G-M-N2}.
\begin{definition}\label{quo}
Let $\phi(x) \in \Z[x]$ be a monic polynomial, and let 
$$f(x) = a_n(x)\phi(x)^n + \cdots + a_1(x)\phi(x) + a_0(x),$$ with $a_i(x) \in \Z[x]$, $\deg a_i(x) < \deg \phi(x)$, be the $\phi$-adic expansion of $f(x)$. Then we define the quotients attached to this $\phi$-expansion, by definition, the different quotients $q_1(x), \cdots, q_n(x)$ that are obtained along the computation of the coefficients of the expansion:
\begin{align*}
f(x) &= \phi(x)q_1(x) + a_0(x),\\
q_1(x) &= \phi(x)q_2(x) + a_1(x),\\
\cdots &= \cdots ~\vspace*{0.5cm}~\cdots\vspace*{0.5cm}~~~\cdots~~~\cdots,\\
q_{n-1}(x) &= \phi(x)q_n(x) + a_{n-1}(x),\\
q_n(x) &= \phi(x)\cdot 0 + a_n(x) = a_n(x).
\end{align*}
\end{definition}

We now state the following weaker version of the theorems proved by Guardia, Montes and Nart in 2012 \cite[Theorem 4.18]{G-M-N} and in 2015  \cite{G-M-N2}. Its proof is omitted.
\begin{theorem}\label{index} Let $p$ be a prime number. Let $L = \Q(\beta)$ where $\beta$  is a root of a monic polynomial  $g(x) = a_nx^n + \cdots + a_0 \in \Z[x]$, $a_0\neq 0$ with $g(x) \equiv x^n$ mod $p$. Suppose that the $p$-Newton polygon of $g(x)$ of first order consists of a single edge with negative slope $\lambda$ and the residual polynomial of $g(x)$ is given by $T(g)(Y) = \psi(Y)^s \in \F_p[Y]$ for some $s\geq 2$, a linear polynomial $\psi(Y) (\neq Y) \in \F_p[Y]$. Let $\phi(x)$ be a key polynomial attached with the slope $\lambda$ and $V$ denote the corresponding second order valuation. Let the $V$-Newton polygon of $g(x)$ of second order has $k$-sides $E_1, \cdots, E_k$ with negative slopes $\lambda_1, \cdots, \lambda_k$. Let $\lambda_t = -\frac{\ell_t}{e_t}$ with $\gcd(\ell_t, e_t) = 1$ and $[a_t, b_t]$ denote the projection to the horizontal axis of the side of slope $\lambda_t$ for $1\leq t\leq k$. If $g(x)$ is $V$-regular, then the following holds:
\begin{enumerate}
\item The highest power of $p$ dividing the index $I_L(\beta)$ equals $N_1 + N_2$, where $N_1$ is the number of points with positive integer coordinates lying on or below the $p$-Newton polygon of $g(x)$ and $N_2$ is the number of points with positive integer coordinates lying on or below the $V$-Newton polygon of $g(x)$ and lying above the horizontal line passing through the last vertex  of this polygon.
\item Let $y_j$ denote the ordinate of the point of the $p$-Newton polygon of $g(x)$ of first order with abscissa $j$, and let $Y_j$ denote the ordinate  of the point of the $V$-Newton polygon of $g(x)$ of second order with abscissa $j$. Then the following set $S$ is a $p$-integral basis of $L$
$$S = \bigg\{\dfrac{\theta^{n-u}q_j(\theta)}{p^{\lfloor y_u + \frac{Y_{j} - jV(\phi(x))}{e}\rfloor}} : n-e< u\leq n, b_t-e_tf_t < j \leq b_t, 1\leq t\leq k \bigg\},$$ where $e$ is the smallest positive integer such that $e\lambda \in \Z$, $q_j(\theta)$ is the $j$-th quotient in the $\phi$-adic expansion of $f(x)$ as in Defintion \ref{quo} and $f_t = \frac{b_t-a_t}{e_t}$ for $1\leq t\leq r$.
\end{enumerate} 
\end{theorem}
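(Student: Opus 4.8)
The plan is to obtain this statement as a specialization of Montes' algorithm and of the theorem of the index of \cite{G-M-N} to the situation in which the algorithm terminates at order two, and then to read off the $p$-integral basis from the reduced-basis formalism of \cite{G-M-N2}. First I would record the first-order data. Since $g(x)\equiv x^n\bmod p$ has one-sided $p$-Newton polygon of slope $\lambda=-\ell/e$ but residual polynomial $T(g)(Y)=\psi(Y)^s$ with $\psi$ linear and $s\geq 2$, Ore's theorem (Theorem \ref{2.3}) does \emph{not} apply, because $T(g)$ has a repeated root; this is precisely what forces a second-order refinement. The role of $N_1$ is to record the first-order contribution to the index: by the same lattice-point interpretation that underlies Theorem \ref{2.3}, the number of points with positive integer coordinates on or below the first polygon equals the first-order index $\operatorname{ind}_1(g)$.

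Next I would bring in the key polynomial $\phi(x)$ (of degree $e$, since $\deg\psi=1$) and the second-order valuation $V$ attached to the type $(x;\lambda,\psi(Y))$, whose defining properties and the valuation formula \eqref{valuation} are recorded above. Forming the $\phi$-adic expansion $g(x)=\sum_i a_i(x)\phi(x)^i$ and its $V$-Newton polygon, the hypothesis that $g$ is $V$-regular means each second-order residual polynomial $T_t(Y)$ is separable over $\F_p[Y]/\langle\psi(Y)\rangle$. The structural input is the Montes factorization theorem: under $V$-regularity the algorithm stops at order two, and the monic irreducible factors of $g$ over $\Q_p$ are in bijection with the irreducible factors of the $T_t(Y)$, each carrying an explicit ramification index $e\,e_t$ and residue degree determined by $\deg\psi$ and the degree of the corresponding factor. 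The theorem of the index then yields $v_p(I_L(\beta))=\operatorname{ind}_1(g)+\operatorname{ind}_2(g)$, with no higher-order terms because the algorithm terminates; identifying $\operatorname{ind}_2(g)$ with the lattice-point count $N_2$ (points on or below the $V$-polygon and above the horizontal line through its last vertex) establishes part (1).

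For part (2) I would build the basis from the quotients $q_j(x)$ of Definition \ref{quo} together with the monomial fillers $\theta^{\,n-u}$, $n-e<u\leq n$, which exhaust the $e$ residues within each $\phi$-block. The heart of the argument is a pair of valuation facts: the first-order Newton ordinate $y_u$ governs the valuation supplied by $\theta^{\,n-u}$, while the second-order ordinate controls $q_j(\theta)$ through an identity of the form $V(q_j(\theta))=Y_j-j\,V(\phi(x))$. Dividing $\theta^{\,n-u}q_j(\theta)$ by $p^{\lfloor y_u+(Y_j-jV(\phi))/e\rfloor}$ (the factor $e$ renormalizing $V$ back to the $p$-adic scale, the floor forcing integrality of the exponent) then produces an element all of whose valuations at the primes above $p$ are nonnegative, hence an algebraic integer. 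I would check that the listed elements are exactly $n$ in number, triangular in $\theta$ after reordering, and that the sum of their denominator exponents equals $N_1+N_2=v_p(I_L(\beta))$; Proposition \ref{p1} then upgrades ``algebraic integer with the correct denominator sum'' to the conclusion that $S$ is a $p$-integral basis.

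The main obstacle is part (2): proving the valuation identity for $V(q_j(\theta))$ and, more delicately, that the resulting elements are genuinely integral and $\Z_{(p)}$-independent rather than merely carrying the right denominators. This requires the precise dictionary between the second-order valuation $V$ on $\Q_p(x)$, the $p$-adic valuations on $L$ attached to the factors of $g$, and the Okutsu--Montes reduced bases of \cite{G-M-N2}; once that correspondence is in place, the floor-exponent bookkeeping and the appeal to Proposition \ref{p1} are routine.
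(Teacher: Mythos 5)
The paper does not actually prove this theorem: it states it explicitly as a ``weaker version'' of \cite[Theorem 4.18]{G-M-N} and of the integral-basis results of \cite{G-M-N2}, with the proof omitted, and your proposal---specializing the Montes theorem of the index to the case where the algorithm terminates at order two for part (1), and reading off the basis from the quotients $q_j$ and the Okutsu--Montes reduced-basis formalism of \cite{G-M-N2} for part (2)---is precisely the derivation that citation encodes. So your route coincides with the paper's; the steps you flag as remaining obstacles (the identity $V(q_j(\theta))=Y_j-jV(\phi(x))$, integrality, and $\Z_{(p)}$-independence of the listed elements) are exactly the content of the cited theorems rather than gaps relative to the paper's own treatment.
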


We now state a basic lemma which will be used in the sequel. It is already known (cf. \cite[Problem 435]{Book}) and we omit its proof. 
\begin{lemma}\label{count}
Let $t, b$ be positive integers with $\gcd(t,b) = c$. Let $P$ denote the set of points in the plane with positive integer coordinates lying inside or on the triangle with vertices $(0,0), (t,0), (0,b)$. Then 
$$\#P = \sum\limits_{i=0}^{t-1}\big\lfloor\frac{ib}{t}\big\rfloor =  \frac{1}{2}[(t-1)(b-1) + c-1].$$
\end{lemma}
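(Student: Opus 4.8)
The final statement is Lemma~\ref{count}, a lattice-point count in a right triangle. Let me plan a proof.

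\medskip

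The plan is to prove the lemma by establishing the two claimed equalities separately: first the combinatorial identity $\#P=\sum_{i=0}^{t-1}\lfloor ib/t\rfloor$, and then the closed form $\frac{1}{2}[(t-1)(b-1)+c-1]$ for this sum.

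For the first equality, I would count the points of $P$ column by column. Fix an integer abscissa $i$ with $1\leq i\leq t-1$ (the columns $i=0$ and $i=t$ contribute no points with \emph{positive} coordinates, since on $x=0$ the only lattice points lie on an edge with $y$-intercept handled separately, and $x=t$ meets the triangle only at $(t,0)$). The hypotenuse joining $(t,0)$ to $(0,b)$ has equation $\frac{x}{t}+\frac{y}{b}=1$, so on the vertical line $x=i$ the triangle occupies $0\leq y\leq b(1-i/t)=b(t-i)/t$. The number of integers $y$ with $1\leq y\leq b(t-i)/t$ is exactly $\lfloor b(t-i)/t\rfloor$. Summing over $i=1,\dots,t-1$ and re-indexing $i\mapsto t-i$ gives $\#P=\sum_{i=1}^{t-1}\lfloor ib/t\rfloor=\sum_{i=0}^{t-1}\lfloor ib/t\rfloor$, the $i=0$ term being zero.

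\medskip

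For the second equality, the cleanest route is the standard pairing argument underlying quadratic reciprocity's geometric proof. Consider the full rectangle of lattice points $(i,j)$ with $1\leq i\leq t-1$ and $1\leq j\leq b-1$; it contains $(t-1)(b-1)$ points. No such interior lattice point lies exactly on the hypotenuse unless $\frac{i}{t}+\frac{j}{b}=1$, i.e.\ $ib+jt=tb$; the number of lattice points strictly inside the rectangle lying on the hypotenuse is $c-1$ (as $i$ ranges over the positive multiples of $t/c$ below $t$). I would split the remaining $(t-1)(b-1)-(c-1)$ points into those strictly below the hypotenuse and those strictly above it. The central observation is that the map $(i,j)\mapsto(t-i,\,b-j)$ is an involution on the rectangle's interior lattice points that fixes the hypotenuse setwise and swaps ``strictly below'' with ``strictly above.'' Hence these two regions contain equally many points, each equal to $\frac{1}{2}[(t-1)(b-1)-(c-1)]$. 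The points counted by $\sum_{i=1}^{t-1}\lfloor ib/t\rfloor$ are precisely those on or below the hypotenuse with $1\leq i\leq t-1$, namely the strictly-below points together with the $c-1$ on-line points, giving
$$\sum_{i=1}^{t-1}\Big\lfloor\frac{ib}{t}\Big\rfloor=\tfrac{1}{2}\big[(t-1)(b-1)-(c-1)\big]+(c-1)=\tfrac{1}{2}\big[(t-1)(b-1)+c-1\big],$$
as desired.

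\medskip

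The main obstacle is bookkeeping the boundary: one must argue carefully that the points with $\frac{i}{t}+\frac{j}{b}=1$ and $1\leq i\leq t-1$ number exactly $c-1$, and that the column-sum $\sum\lfloor ib/t\rfloor$ genuinely counts the on-or-below points (so that the on-hypotenuse points are included, not double-counted or dropped). Writing $t=ct'$, $b=cb'$ with $\gcd(t',b')=1$, the solutions of $ib+jt=tb$ in the required range are $i=t'k$, $j=b'(c-k)$ for $k=1,\dots,c-1$, which confirms the count $c-1$; once this is pinned down the involution argument closes everything. Since the paper cites \cite[Problem 435]{Book} for exactly this fact, I would keep the write-up brief and lean on the involution picture rather than belabor the floor-function manipulations.
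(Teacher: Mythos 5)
Your proof is correct, but there is nothing in the paper to compare it against: the authors explicitly omit the proof of Lemma~\ref{count}, remarking only that it is known and citing \cite[Problem 435]{Book}. Your write-up therefore supplies what the paper delegates to a reference, and it does so by the standard argument: the first equality by counting lattice points column by column (with the correct observation that the columns $x=0$ and $x=t$ contribute nothing to a count of \emph{positive} coordinates, and the reindexing $i\mapsto t-i$), and the second by the reflection involution $(i,j)\mapsto(t-i,b-j)$ on the open rectangle $\{1,\dots,t-1\}\times\{1,\dots,b-1\}$, which swaps the points strictly below the hypotenuse with those strictly above and fixes the on-line points setwise. Your boundary bookkeeping is also right: writing $t=ct'$, $b=cb'$ with $\gcd(t',b')=1$, the relation $ib+jt=tb$ forces $t'\mid i$, so the on-hypotenuse points are exactly $(t'k,\,b'(c-k))$ for $1\leq k\leq c-1$, giving the count $c-1$; adding these to the $\tfrac12[(t-1)(b-1)-(c-1)]$ strictly-below points yields $\tfrac12[(t-1)(b-1)+c-1]$ as claimed. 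The only stylistic remark is that your phrase ``the points counted by $\sum_{i=1}^{t-1}\lfloor ib/t\rfloor$ are precisely those on or below the hypotenuse'' is justified by your first paragraph (via the reindexing), not by reading each summand as a column of that triangle directly; it would be worth one sentence making that cross-reference explicit.
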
 
\section{Computation of $p$-integral basis}
In this section, we first recall our notations introduced in the first paragraph of Section 2. Let $K = \Q(\theta)$ with $\theta$ a root of an irreducible polynomial $x^{9} - a$ belonging to $\Z[x]$, where  $a$ is a $9$-th power-free integer.   Whenever $3$ divides both $a$ and $v_{3}(a)$, we denote $\frac{v_{3}(a)}{3}$ by $c$; in this situation we can write $a = 3^{3c}b$ with $3 \nmid b$.

With the above notations, we now prove a theorem.
\begin{theorem}\label{as2}
 Let $K = \Q(\theta)$ be an algebraic number field where $\theta$ is a root of an irreducible polynomial $f(x) = x^{9} - a$ belonging to $\Z[x]$, where $a$ is $9$-th power free integer. Suppose that $3$ divides both $a$ and $v_{3}(a)$. Let $b$, $c$ be as above and $k$ be the smallest positive integer such that $b \equiv k\mod 9$. For $k\in \{1,8\}$, let $q_j(\theta)$ denote the polynomial $\sum\limits_{s=0}^{3-j}\binom{3}{s}(3^{c}b)^{s}(\theta^{3}-3^{c}b)^{3-j-s}$ for $1\leq j\leq 2$. Then the highest power of $3$ dividing the index $I_K(\theta)$ is given by 
  $$v_{3}(I_K(\theta)) =\left\{\begin{array}{cl}
 12c,& \mbox{if}~ k\in \{4,5\};\\
{12c+1}, & \mbox{}~\mbox{if}~ k\in \{1,2, 7, 8\}.
\end {array}\right.$$
Moreover, the following set $S$ is a $3$-integral basis of $K$, where $q_1(\theta)$, $q_2(\theta)$ for $k\in \{2,4,,5,7\}$ and $m_i$'s for $1\leq i\leq 7$ are as in Table \ref{tab:1} and Table \ref{tab:2} of Theorem \ref{main} depending on the values of $c$ and $k$.
\begin{align*}
S =& \{ 1,\theta, \frac{\theta^2}{3^{m_1}}, \frac{q_2(\theta)}{3^{m_2}}, \frac{\theta q_2(\theta)}{3^{m_3}}, \frac{\theta^2 q_2(\theta)}{3^{m_4}},\frac{q_1(\theta)}{3^{m_5}}, \frac{\theta q_1(\theta)}{3^{m_6}}, \frac{\theta^2 q_1(\theta)}{3^{m_7}}   \}.
\end{align*}

\end{theorem}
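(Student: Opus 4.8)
The plan is to apply the Newton-polygon machinery of Theorems~\ref{2.3} and~\ref{index}, since $3\mid a$ forces $f(x)=x^9-a\equiv x^9\pmod 3$. Writing $a=3^{3c}b$ with $3\nmid b$ (so $c\in\{1,2\}$, the bound coming from $9$-th power-freeness together with $3\mid v_3(a)$), the first-order $3$-Newton polygon of $f$ is the single edge joining $(0,3c)$ and $(9,0)$, of slope $-c/3$; hence $e=3$ and the first-order residual polynomial is $T(f)(Y)=Y^3-\bar b=(Y-\bar b)^3$ in $\F_3[Y]$. Thus $f$ is not $3$-regular, with $\psi(Y)=Y-\bar b$ and $s=3$, placing us squarely in the hypotheses of Theorem~\ref{index}. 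The number $N_1$ of points with positive integer coordinates on or below this edge is read off from Lemma~\ref{count} applied to the triangle with vertices $(0,0),(9,0),(0,3c)$: since $\gcd(9,3c)=3$ this gives $N_1=\tfrac12[8(3c-1)+2]=12c-3$. By Theorem~\ref{index}(1) it then remains to locate the second-order polygon and show $v_3(I_K(\theta))=N_1+N_2$ with $N_2\in\{3,4\}$.

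The heart of the argument is the choice of the degree-$3$ key polynomial $\phi(x)$ attached to the slope $-c/3$ and the verification that $f$ is $V$-regular for it. I would first treat $k\in\{1,8\}$, where the naive lift $\phi=x^3-3^cb$ already works: expanding $f=(\phi+3^cb)^3-3^{3c}b=\phi^3+3^{c+1}b\,\phi^2+3^{2c+1}b^2\,\phi+3^{3c}b(b^2-1)$ and using $V(x)=c$, $V(\phi)=3c$, one reads off the second-order vertices $(0,9c+3v_3(b^2-1)),(1,9c+3),(2,9c+3),(3,9c)$. For $k\in\{1,8\}$ one has $v_3(b^2-1)\ge 2$, so the polygon splits into two sides of slopes $-3$ and $-3/2$, each of degree one and hence automatically separable, so $f$ is $V$-regular; counting the points above the line $y=9c$ gives $N_2=4$. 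I would also note that this $\phi$ produces exactly the quotients $q_1,q_2$ of the closed form $\sum_s\binom 3s(3^cb)^s(\theta^3-3^cb)^{3-j-s}$ appearing in the statement.

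For $k\in\{2,4,5,7\}$ the value $v_3(b^2-1)=1$ collapses the above polygon to a single edge of slope $-1$ whose second-order residual is a cube, so $\phi=x^3-3^cb$ fails to be $V$-regular and must be refined. The refinements in Table~\ref{tab:1} (the correction $-(-1)^k3b\,\theta$ for $c=1$, respectively $-(-1)^{\lfloor k/2\rfloor}3b\,\theta^2$ for $c=2$, the signs forced by the residue of $b$) are designed precisely so that, after recomputing the $\phi$-adic expansion and the $V$-values of its coefficients $a_0(x),a_1(x),a_2(x)$, the second-order polygon becomes a single edge (for $c=1$, of slope $-4/3$ when $k\in\{4,5\}$ and of slope $-5/3$ when $k\in\{2,7\}$). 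Each such edge has degree one, hence is separable and $f$ is $V$-regular; counting points above $y=9c$ yields $N_2=3$ for $k\in\{4,5\}$ and $N_2=4$ for $k\in\{2,7\}$. Combining, $v_3(I_K(\theta))=N_1+N_2$ equals $12c$ for $k\in\{4,5\}$ and $12c+1$ for $k\in\{1,2,7,8\}$, as claimed.

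Finally, for the $3$-integral basis I would feed both polygons into the explicit formula of Theorem~\ref{index}(2): with $n=9$, $e=3$ the index $u$ runs over $\{7,8,9\}$ (contributing the factors $\theta^2,\theta,1$) and $j$ runs over the projections of the sides (contributing $q_1,q_2$ and $q_3=1$), so the nine elements are exactly $\theta^{9-u}q_j(\theta)/3^{\lfloor y_u+(Y_j-jV(\phi))/e\rfloor}$. Reading off the first-order ordinates $y_u$ and the second-order ordinates $Y_j$ in each case reproduces the exponents $m_1,\dots,m_7$ of Table~\ref{tab:2}, and the identity $\sum_i m_i=v_3(I_K(\theta))$ is precisely the criterion of Proposition~\ref{p1} certifying that $S$ is a $3$-integral basis. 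The main obstacle is the middle step: finding the correct refined $\phi$ for each residue class $k$ and carrying out the sign-sensitive computation of the $V$-values of $a_0,a_1,a_2$ needed to pin down the second-order polygon and confirm $V$-regularity; the $c=2$ case requires a parallel but separate computation, and the floor-exponent bookkeeping for the basis, though routine, must be handled case by case.
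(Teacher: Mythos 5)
Your proposal is correct and follows essentially the same route as the paper's proof: the same first-order $3$-Newton polygon giving $N_1=12c-3$ via Lemma~\ref{count}, the same key polynomials ($\phi(x)=x^3-3^cb$ for $k\in\{1,8\}$, where indeed $v_3(b^2-1)\geq 2$, and the refined $\phi$'s of Table~\ref{tab:1} for $k\in\{2,4,5,7\}$, where the second-order residual of the naive choice is a cube), the same counts $N_2=4$ for $k\in\{1,2,7,8\}$ and $N_2=3$ for $k\in\{4,5\}$, and the basis read off from Theorem~\ref{index}(2) with $u\in\{7,8,9\}$ and the quotients $q_j(\theta)$. The only slip is cosmetic: for $c=2$ and $k\in\{2,7\}$ the second-order polygon is not a single edge but has two sides, with vertices $(0,23)$, $(1,21)$, $(3,18)$; since each side still has degree one, $V$-regularity and your count $N_2=4$ are unaffected.
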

\begin{proof}
Since $3$ divides both $a$ and $v_{3}(a)$, the $3$-Newton polygon of $f(x)$ has single edge with vertices $(0, 3c)$ and $(9,  0)$ having slope $\lambda = -\frac{c}{3}$, where  $c = \frac{v_{3}(a)}{3}$ with $1\leq c \leq 2$ (see Figure \ref{fig:1}). 

\begin{figure}[h]
	\begin{center}
		
		\begin{tikzpicture}[thick, scale=0.7][domain=0:2]
			\draw[->] (-1,0) -- (8,0)
			node[below] {$x$};
			\draw[->] (0,-2) -- (0,4)
			node[left] {$y$};
			% doesn't work:
			%\draw[mark=*] convex hull[points={(0,0),(1,1),(2,2),(0,1),(2,0)}];
			
			%should produce something similar to the following:
			\draw (0,3) node[left]{$(0,3c)$} -- (7,0) node[below]{$(9,0)$} ;
			\foreach \point in {(0,3), (7,0)} {
				\fill[black] \point circle[radius=2pt];
			}
		\end{tikzpicture}
	\end{center}
	\caption{$3$-Newton polygon of $f(x)$}\label{fig:1}
\end{figure}
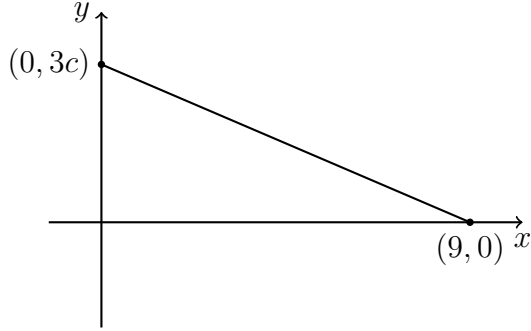

Keeping in mind that $3\nmid c$ and Definition \ref{residue}, the residual polynomial of $f(x)$ with respect to $3$ is given by $T(f)(Y) = Y^{3}-\overline{b}$ belonging to $\F_{3}(Y)$. Since $3\nmid b$, we see that $T(f)(Y) = Y^{3}-\overline{b} = (Y - \overline{b})^{3}$ = $(\psi(Y))^{3}$ (say) in $\F_{3}[Y]$. In view of Lemma \ref{count}, the number of points with positive integer coordinates lying on or below the $3$-Newton polygon of $f(x)$  is given by 
\begin{equation}\label{N1}
N_1 = \frac{(9-1)(3c-1)+2}{2} = 12c-3.
\end{equation}
Since $T(f)(Y) = (\psi(Y))^{3} $ belonging to  $\F_{3}[Y]$ has repeated roots, Theorem \ref{2.3}  is not applicable here.  So consider $\phi(x) = x^{3} - 3^{c}b$. Observe that $\phi(x)$ is a key polynomial attached with the slope $\lambda$.  As in $(\ref{valuation})$, we can define the valuation $V$ of the second order attached to the data $(x; \lambda, \psi(Y))$ such that 
\begin{equation}\label{val2}
V(x) = c, V(3) = 3 \mbox{ and }V(\phi(x)) = 3c.
\end{equation}

Keeping in mind that $a = 3^{3c}b$, the $\phi$-expansion of $f(x)$ is 
\begin{align*}
f(x) &= x^{9}-a \\
&= (x^{3})^{3}- a =  (\phi(x) + 3^{c}b)^{3}-a\\ &= \phi(x)^{3} + \binom{3}{1}\phi(x)^{2}3^{c}b + \binom{3}{2}\phi(x)3^{2c}b^2 +  3^{3c}b(b^{2} - 1).
\end{align*}
The proof of the theorem is split into two cases.
\\

\textbf{CASE 1:} $b^{2} \equiv 1$ mod $9$. In this situation, the $V$-Newton polygon of $f(x) = \phi(x)^{3} + \phi(x)^{2}3^{c+1}b + \phi(x)3^{2c+1}b^2 +  3^{3c}b(b^{2} - 1)$ is the lower convex hull of the set of points $\{(0, \geq (9c+6)), (1, 9c+3), (2, 9c+3), (3, 9c)\}$.  The $V$-Newton polygon of $f(x)$ has two edges with length of horizontal projection $1$ and $2$ having slopes $\lambda_1 (\leq -3)$ and $\lambda_2 = -\frac{3}{2}$, respectively (see Figure \ref{fig:2}).  The residual polynomials corresponding to each side of the $V$-Newton polygon of $f(x)$ are linear polynomials. Hence $f(x)$ is $V$-regular. So, applying Lemma \ref{count}, the number of points with positive integer coordinates lying on or below the $V$-Newton polygon of $f(x)$ and lying above the horizontal line passing through the last vertex $(3, 9c)$ is given by 
$$N_2 = 4.$$
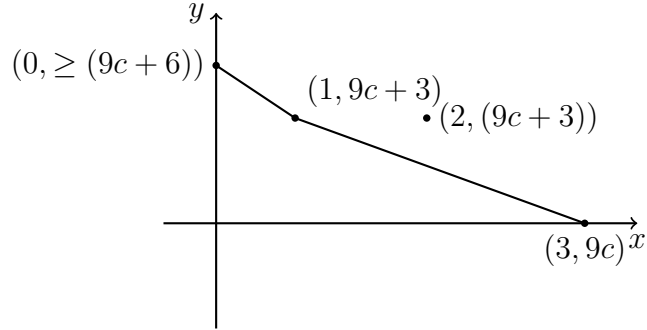
\begin{figure}[h]
	\begin{center}
		
		\begin{tikzpicture}[thick, scale=0.7][domain=0:2]
			\draw[->] (-1,0) -- (8,0)
			node[below] {$x$};
			\draw[->] (0,-2) -- (0,4)
			node[left] {$y$};
			% doesn't work:
			%\draw[mark=*] convex hull[points={(0,0),(1,1),(2,2),(0,1),(2,0)}];
			
			%should produce something similar to the following:
			\draw (0,3) node[left]{$(0,\geq (9c+6))$} -- (1.5,2) node[above right]{$(1, 9c+3)$} -- (7,0) node[below]{$(3,9c)$};
			\draw (4,2) node[right]{$(2,(9c+3))$};
			\foreach \point in {(0,3), (1.5,2), (4,2), (7,0)} {
				\fill[black] \point circle[radius=2pt];
			}
		\end{tikzpicture}
	\end{center}
	\caption{$3$-Newton polygon of $f(x)$}\label{fig:2}
\end{figure}
Therefore, by Theorem \ref{index}(1) and Equation $(\ref{N1})$, 
\begin{align*}
v_{3}(I_K(\theta)) &= N_1 + N_2 = 12c+1.
\end{align*}
As in the notations of Theorem \ref{index}(2), we see that $y_u = (3-u)\frac{c}{3}$ for $1\leq u\leq 3$, $e = 3$, $k= 2$, $a_1 = 0, b_1 = 1, e_1 = 1, f_1 = 1, a_2 = 1, b_2 = 3, e_2 = 2, f_2 = 1, Y_j = 9c + (3-j)\frac{3}{2}$ and $q_j(\theta) = \sum\limits_{s=0}^{3-j}\binom{3}{s}(3^{c}b)^{s}(\theta^{3}-3^{c}b)^{3-j-s}$ for $1\leq j\leq 2$, $q_3(\theta) = 1$. Therefore, it now follows from Theorem \ref{index}(2) that the following set $S$ is a $3$-integral basis of $K$ corresponding to the values of $c$. 
\begin{align*}
c=1:~& S= \{ 1,\theta, \theta^2, \frac{q_2(\theta)}{3}, \frac{\theta q_2(\theta)}{3}, \frac{\theta^2 q_2(\theta)}{9},\frac{q_1(\theta)}{27}, \frac{\theta q_1(\theta)}{27}, \frac{\theta^2 q_1(\theta)}{27}   \}.\\
c=2:~& S= \{ 1,\theta, \frac{\theta^2}{3}, \frac{q_2(\theta)}{9}, \frac{\theta q_2(\theta)}{27}, \frac{\theta^2 q_2(\theta)}{27},\frac{q_1(\theta)}{243}, \frac{\theta q_1(\theta)}{243}, \frac{\theta^2 q_1(\theta)}{729}   \}.
\end{align*}
\textbf{CASE 2:} $b^{2} \not\equiv 1$ mod $9$. Since $3\nmid b$, we have $3$ divides $(b^2-1)$. Hence there are four choices for $b$ as $b \equiv 2, 4, 5, 7 \mod 9$. In these cases, the choice of key polynomial $\phi(x) = x^{3} - 3^{c}b$ does not work. Hence we divide this case into two subcases corresponding to the values of $c$. \\

\noindent{\bf{subcase (i):}} Let $c=1$ and $b\equiv k\mod 9$ with $k\in\{2,4,5,7\}$. Then  consider $\phi(x) = x^3 - (-1)^{k}3bx - 3b$. It is easy to see that $\phi(x)$ is a key polynomial attached with the slope $\lambda = \frac{-1}{3}.$ As in  Equation \ref{val2}, $V(x) = 1, V(3) = 3 \mbox{ and }V(\phi(x)) = 3$.  Keeping in mind that $a = 27b$, the $\phi$-expansion of $f(x)$ is given by the following equation. 
\begin{align*}
f(x) = & \phi(x) ^3 +   \phi(x) ^2  [(-1)^k 9  b  x + 9  b] +   \phi(x)  [ 27  b ^2  x ^2 + (-1)^k(54  b ^2  x + 27  b ^3)  + 27  b ^2] +  81  b ^3  x ^2 \\ &+  81  b ^4  x + (-1)^k(81  b ^3  x + 81  b ^4)   + 27  b ^3 - 27  b.
\end{align*}
 So, one can easily check that the $V$-Newton polygon of $f(x)$ has single edge with vertices $(0, 14)$ and $(3, 9)$  or $(0, 13)$ and $(3, 9)$ according to $k\in \{2,7\}$ or not.  The residual polynomial corresponding to this single edge of the $V$-Newton polygon of $f(x)$ is linear. Hence $f(x)$ is $V$-regular. So by using Lemma \ref{count}, $N_2$ equals $4$ or $3$ according as $k\in \{2,7\}$ or not. Hence by Theorem \ref{index}(1) and $(\ref{N1})$, 
\begin{align*}
v_{3}(I_K(\theta)) &= N_1 + N_2 = 12 \text{ when }k\in\{4,5\},\\
v_{3}(I_K(\theta)) &= N_1 + N_2 = 13 \text{ when }k\in \{2,7\}.
\end{align*}
With notations as in Theorem \ref{index}(2), we see that $n = 9, e=3, a_1=0 , b_1=3, e_1=3, f_1= 1$ and $y_7= \frac{2}{3}, y_8= \frac{1}{3} , y_9 =0.$
Also, note that 
\begin{align*}
q_3(\theta) &=1, q_2(\theta) = \phi(\theta) + (-1)^k 9b\theta +9b,\\
q_1(\theta) &= \phi(\theta)^2+ \phi(\theta) ((-1)^k 9b\theta +9b) + 27b^2 \theta^2 + (-1)^k 54 b^2\theta +(-1)^k 27b^3+27b^2.
\end{align*}
When $k\in \{4,5\}$, then one can easily check that $Y_1= \frac{35}{3}, Y_2 = \frac{31}{3} $, $ Y_3 = 9$ and hence by Theorem \ref{index},  the following set is a  $3$-integral basis of $K$:  $$S= \{ 1,\theta, \theta^2, \frac{q_2(\theta)}{3}, \frac{\theta q_2(\theta)}{3}, \frac{\theta^2 q_2(\theta)}{9},\frac{q_1(\theta)}{9}, \frac{\theta q_1(\theta)}{27}, \frac{\theta^2 q_1(\theta)}{27}   \}.$$
When $k\in \{2,7\}$, then note that $Y_1= \frac{37}{3}, Y_2 = \frac{32}{3} $, $ Y_3 = 9$ and hence again using Theorem \ref{index},  the following set is a  $3$-integral basis of $K$:  $$S= \{ 1,\theta, \theta^2, \frac{q_2(\theta)}{3}, \frac{\theta q_2(\theta)}{3}, \frac{\theta^2 q_2(\theta)}{9},\frac{q_1(\theta)}{27}, \frac{\theta q_1(\theta)}{27}, \frac{\theta^2 q_1(\theta)}{27}   \}.$$

\noindent{\bf{subcase (ii):}} Let $c=2$ and $b\equiv k\mod 9$ with $k\in\{2,4,5,7\}$. Then  consider $\phi(x) = x^3 - (-1)^{\lfloor \frac{k}{2}\rfloor}3bx^2 - 9b$. It is easy to see that $\phi(x)$ is a key polynomial attached with the slope $\lambda = \frac{-2}{3}.$ As in  Equation \ref{val2}, $V(x) = 2, V(3) = 3 \mbox{ and }V(\phi(x)) = 6$.  Keeping in mind that $a = 3^{6}b$, the $\phi$-expansion of $f(x)$ is given by the following equation
\begin{align*}
f(x) = & \phi(x) ^3 +   \phi(x) ^2  [(-1)^{\lfloor \frac{k}{2}\rfloor} 9  b  x ^2 + 27  b ^2  x + (-1)^{\lfloor \frac{k}{2}\rfloor}108  b ^3   + 27  b] + \phi(x)[405  b ^4  x ^2 +\\&     (-1)^{\lfloor \frac{k}{2}\rfloor} 162  b ^2  x ^2  + (-1)^{\lfloor \frac{k}{2}\rfloor} 243  b ^5  x + 486  b ^3  x + 729  b ^6   + (-1)^{\lfloor \frac{k}{2}\rfloor} 1944  b ^4  + 243  b ^2] +\\& (-1)^{\lfloor \frac{k}{2}\rfloor} 2187  b ^7  x ^2 + 3645  b ^5  x ^2 + (-1)^{\lfloor \frac{k}{2}\rfloor} 729  b ^3  x ^2 + (-1)^{\lfloor \frac{k}{2}\rfloor} 2187  b ^6  x + 2187  b ^4  x   \\&+ 6561  b ^7  + (-1)^{\lfloor \frac{k}{2}\rfloor} 8748  b ^5   + 729  b ^3 - 729  b
\end{align*}
So, one can easily check that when $k\in \{4,5\} $, the $V$-Newton polygon of $f(x)$ has single edge with vertices  $(0, 22)$ and $(3, 18)$. When $k\in \{2,7\}$, then the $V$-Newton polygon of $f(x)$ has two edges joining the vertices $(0,23)$ to $ (1,21)$ and $ (1,21)$ to $(3,18)$.      The residual polynomial corresponding to each edge of the $V$-Newton polygon of $f(x)$ is linear. Hence $f(x)$ is $V$-regular. So by using Lemma \ref{count}, $N_2$ equals $4$ or $3$ according to $k\in \{2,7\}$ or not. Hence by Theorem \ref{index}(1) and $(\ref{N1})$, 
\begin{align*}
v_{3}(I_K(\theta)) &= N_1 + N_2 = 24 \text{ when }k\in\{4,5\}\\
v_{3}(I_K(\theta)) &= N_1 + N_2 = 25 \text{ when }k\in \{2,7\}.
\end{align*}
With notations as in Theorem \ref{index}(2), we see that $n = 9, e=3$ and $y_7= \frac{4}{3}, y_8= \frac{2}{3} , y_9 =0.$
Also, note that 
\begin{align*}
q_3(\theta) &=1, q_2(\theta) = \phi(\theta) + (-1)^{\lfloor \frac{k}{2}\rfloor} 9  b  \theta^2 + 27  b ^2  \theta + (-1)^{\lfloor \frac{k}{2}\rfloor}108  b ^3   + 27  b\\
q_1(\theta) &= \phi(\theta)^2+ \phi(\theta) (-1)^{\lfloor \frac{k}{2}\rfloor} 9  b  \theta^2 + 27  b ^2  \theta + (-1)^{\lfloor \frac{k}{2}\rfloor}108  b ^3   + 27  b) + 405  b ^4  \theta^2  \\ &+(-1)^{\lfloor \frac{k}{2}\rfloor} 162  b ^2  \theta^2  + (-1)^{\lfloor \frac{k}{2}\rfloor} 243  b ^5  \theta + 486  b ^3  \theta + 729  b ^6   + (-1)^{\lfloor \frac{k}{2}\rfloor} 1944  b ^4  + 243  b ^2
\end{align*}
When $k\in \{4,5\}$, then one can easily check that $Y_1= \frac{62}{3}, Y_2 = \frac{58}{3} $, $ Y_3 = 18$ and hence by Theorem \ref{index},  the following set is a  $3$-integral basis of $K$:  $$S= \{ 1,\theta, \frac{\theta^2}{3}, \frac{q_2(\theta)}{9}, \frac{\theta q_2(\theta)}{27}, \frac{\theta^2 q_2(\theta)}{27},\frac{q_1(\theta)}{81}, \frac{\theta q_1(\theta)}{243}, \frac{\theta^2 q_1(\theta)}{729}   \}.$$
When $k\in \{2,7\}$, then note that $Y_1= \frac{64}{3}, Y_2 = \frac{59}{3} $, $ Y_3 = 18$ and hence again using Theorem \ref{index},  the following set is a  $3$-integral basis of $K$:  
$$S= \{ 1,\theta, \frac{\theta^2}{3}, \frac{q_2(\theta)}{9}, \frac{\theta q_2(\theta)}{27}, \frac{\theta^2 q_2(\theta)}{27},\frac{q_1(\theta)}{243}, \frac{\theta q_1(\theta)}{243}, \frac{\theta^2 q_1(\theta)}{729}   \}.$$
 This completes the proof of the theorem.
\end{proof}
\begin{lemma}\label{as1}
Let $K = \Q(\theta)$ with $\theta$ satisfying an irreducible polynomial $f(x) = x^{9} - a$ having integer coefficient. Let $q$ be any prime divisor of $a$ such that $q$ does not divide $\gcd(9, v_{q}(a))$. Then $$v_q(I_K(\theta)) = \frac{8(v_q(a)-1) + \gcd(9, v_{q}(a)) - 1}{2}.$$ In this case, the set $$S = \bigg\{\frac{\theta^i}{q^{\lfloor\frac{iv_q(a)}{9}\rfloor}}~|~0\leq i\leq 8\bigg\}$$ is a $q$-integral basis of $K$.
\end{lemma}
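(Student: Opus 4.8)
The plan is to reduce this to the first-order Newton polygon machinery of Theorem \ref{2.3}, since the hypothesis $q \nmid \gcd(9, v_q(a))$ is exactly what guarantees $q$-regularity, so no second-order analysis is needed. First I would record the $q$-Newton polygon of $f(x) = x^9 - a$. Writing $v = v_q(a)$, the only nonzero terms are $x^9$ (at $(9,0)$) and $-a$ (at $(0,v)$), so the polygon is a single edge from $(0,v)$ to $(9,0)$ with slope $\lambda = -v/9$. Let $e$ be the smallest positive integer with $e\lambda \in \Z$; since $\lambda = -v/9$ in lowest terms has denominator $9/\gcd(9,v)$, we get $e = 9/\gcd(9,v)$ and the residual polynomial $T(f)(Y)$ has degree $d = 9/e = \gcd(9,v)$.

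Next I would check $q$-regularity. The residual polynomial is $T(f)(Y) = Y^{d} - \overline{u}$ for a unit $\overline{u} \in \F_q^\times$ built from the leading unit part of $a$, where $d = \gcd(9,v)$. The separability of $Y^d - \overline u$ over $\F_q$ is governed by $\gcd(d, q)$: it has no repeated roots precisely when $q \nmid d$, i.e. when $q \nmid \gcd(9, v_q(a))$, which is exactly our hypothesis. Hence $f(x)$ is $q$-regular and Theorem \ref{2.3} applies directly. The exponent $v_q(I_K(\theta))$ then equals the number of lattice points with positive integer coordinates lying on or below this single-edge polygon. Applying Lemma \ref{count} to the triangle with vertices $(0,0), (9,0), (0,v)$ with $\gcd(9,v) = c'$ gives $\tfrac12[(9-1)(v-1) + c' - 1] = \tfrac{8(v-1) + \gcd(9,v) - 1}{2}$, matching the claimed formula.

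For the $q$-integral basis I would invoke Proposition \ref{p1}: it suffices to exhibit elements $\alpha_i = \theta^i / q^{k_i}$ with $k_i = \lfloor i v_q(a)/9 \rfloor$ that are $q$-integral and whose exponents sum to $v_q(I_K(\theta))$. The $q$-integrality of each $\theta^i/q^{k_i}$ follows from (the $q$-localized version of) Lemma \ref{1a}: since $q$ contributes exactly $v = v_q(a)$ to $\theta^9$, the element $\theta^i/q^{\lfloor iv/9\rfloor}$ satisfies a monic integral polynomial after clearing the other prime factors of $a$, which are units in $\Z_{(q)}$. Finally I would verify $\sum_{i=1}^{8} \lfloor iv/9 \rfloor = v_q(I_K(\theta))$; this is precisely the first equality $\sum_{i=0}^{t-1}\lfloor ib/t\rfloor = \tfrac12[(t-1)(b-1)+c-1]$ of Lemma \ref{count} with $t = 9$, $b = v$, so the two computations coincide automatically.

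I expect the main obstacle to be purely bookkeeping rather than conceptual: confirming that the exponents $k_i = \lfloor iv/9 \rfloor$ are nondecreasing (needed to apply Proposition \ref{p1}), which is clear since $i \mapsto \lfloor iv/9\rfloor$ is monotone, and checking that the residual polynomial $Y^{\gcd(9,v)} - \overline u$ is correctly identified with the right unit $\overline u$ so that $q$-regularity is genuinely equivalent to $q \nmid \gcd(9,v)$. Once these are in place, the count from Lemma \ref{count} does double duty, giving both the index formula via the lattice-point count under the polygon and the verification that the proposed basis has the correct total exponent, so the two halves of the statement follow from a single application of the lemma.
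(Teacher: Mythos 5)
Your proposal is correct and follows essentially the same route as the paper's proof: the single-edge $q$-Newton polygon with residual polynomial $Y^{\gcd(9,v_q(a))}-\overline{a/q^{v_q(a)}}$, separability from the hypothesis $q\nmid\gcd(9,v_q(a))$, Theorem \ref{2.3} plus Lemma \ref{count} for the index, then Lemma \ref{1a} for $q$-integrality of $\theta^i/q^{\lfloor iv_q(a)/9\rfloor}$ and Lemma \ref{count} again with Proposition \ref{p1} to certify the basis. The only cosmetic difference is that you make explicit the equivalence of regularity with $q\nmid\gcd(9,v_q(a))$ and the localization argument implicit in the paper's use of Lemma \ref{1a}; no gaps.
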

\begin{proof}
The $q$-Newton polygon of $f(x)$ consists of only one edge having slope $\lambda = -\frac{v_q(a)}{9}$. The residual polynomial of $f(x)$ with respect to $q$ is $$T(f)(Y) = Y^{\gcd(9, v_q(a))} - \overline{a/q^{v_q(a)}} \in \F_q[Y].$$ Keeping in mind the hypothesis that $q\nmid \gcd(9, v_{q}(a))$, we see that $T(Y)$ has no repeated roots. So applying Theorem \ref{2.3} and Lemma \ref{count}, we have 
\begin{equation}\label{5.a}
v_q(I_K(\theta)) = \frac{1}{2}[8(v_q(a) - 1) + \gcd(9, v_q(a)) - 1].
\end{equation}
Now, in view of Lemma \ref{1a}, $\frac{\theta^i}{q^{\lfloor\frac{iv_q(a)}{9}\rfloor}}$ is a $q$-integral element for $1\leq i\leq 8$. Applying Lemma \ref{count}, we see that $$\sum\limits_{i=1}^{8}\lfloor\frac{iv_q(a)}{9}\rfloor = \frac{1}{2}[8(v_q(a) - 1) + \gcd(9, v_q(a)) - 1] = v_q(I_K(\theta)).$$ Therefore, in view of Proposition \ref{p1}, it follows that the set $S = \{\frac{\theta^i}{q^{\lfloor\frac{iv_q(a)}{9}\rfloor}}~|~0\leq i\leq 8\}$ is a $q$-integral basis of $K$.
\end{proof}
%The following lemma, which is proved in \cite[Lemma 2.3]{JKS1}, will be used in the sequel.
%\begin{lemma}\label{aj}
%Let $K=\Q(\theta)$ where $\theta$ is a root of an irreducible polynomial $x^n-a$ belonging to $\Z[x]$. Let $\prod\limits_{i=1}^{k} p_i ^{s_i}$ be the prime factorization of $n$.  Suppose that $p_i \nmid a $  for some $i$. If $r_i,n_i$ stand for the integers  $v_{p_i} (a^{p_i-1}-1)-1, \frac{n}{p_i ^{s_i}}$ respectively, then the exact power of $p_i$ dividing $I_K(\theta)$ is $ \sum\limits_{j=1}^{\min \left\lbrace r_i,s_i\right\rbrace  }n_i  p_i^{s_i 
 %   -j}$ or $0$ according as $r_i $ is positive or not.
%\end{lemma}
\begin{lemma}\label{as4}
Let $K = \Q(\theta)$ with $\theta$ satisfying an irreducible polynomial $f(x) = x^{9} - a$ having integer coefficient. Suppose that $3$ does not divide $a$, then the following hold. 
\begin{itemize}
\item[(i)] When $9 \nmid (a^{2}-1)$, then $v_{3}(I_K(\theta)) = 0$ and the set  $\{1, \theta, \cdots, \theta^{8}\}$ is a $3$-integral basis of $K$.
\item[(ii)] When $v_3(a^{2}-1) = 2$, then $v_{3}(I_K(\theta)) = 3$ and the set $\{1, \theta, \theta^2, \cdots, \theta^5, \frac{\theta^6+a\theta^3+1}{3},$ $  \frac{(\theta^6+a\theta^3+1)\theta}{3},  \frac{(\theta^6+a\theta^3+1)\theta^2}{3}\}$ is a $3$-integral basis of $K$.
\item[(iii)] When $27 \mid (a^{2}-1)$, then $v_{3}(I_K(\theta)) = 4$ and the set $\{1, \theta, \theta^2, \cdots, \theta^5, \frac{\theta^6+a\theta^3+1}{3},$ $  \frac{(\theta^6+a\theta^3+1)\theta}{3},  \frac{\theta^8+\sum\limits_{j=0}^{7}(a\theta)^{j}}{9}\}$ is a $3$-integral basis of $K$.
\end{itemize}
\end{lemma}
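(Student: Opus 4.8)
The plan is to reduce all three parts to one Newton-polygon computation for the prime $3$ and then read off the bases through Proposition \ref{p1}. Since $3\nmid a$ we have $\bar a^{2}=\bar 1$ in $\mathbb{F}_{3}$, hence $\bar a^{9}=\bar a$ and $f(x)\equiv(x-a)^{9}\pmod 3$; so I would replace $\theta$ by $\beta=\theta-a$, work with $g(x):=f(x+a)=(x+a)^{9}-a\equiv x^{9}\pmod 3$, and use $\Z[\theta]=\Z[\beta]$ to get $I_K(\theta)=I_K(\beta)$. Writing $g(x)=\sum_{i=1}^{9}\binom{9}{i}a^{9-i}x^{i}+(a^{9}-a)$, the only data needed are the $3$-adic valuations of the coefficients. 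The leading one is $0$; for $1\le i\le 8$ Kummer's theorem gives $v_3\binom{9}{i}=2,2,1,2,2,1,2,2$; and the constant term satisfies $v_3(a^{9}-a)=v_3\big(a(a^{2}-1)(a^{2}+1)(a^{4}+1)\big)=v_3(a^{2}-1)=:w$, because $a^{2}+1\equiv a^{4}+1\equiv 2\pmod 3$. Thus the points controlling the lower hull are $(0,w),(1,2),(3,1),(6,1),(9,0)$.

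I would then split on $w\in\{1,2,\ge 3\}$, which is exactly the trichotomy $9\nmid(a^2-1)$, $v_3(a^2-1)=2$, $27\mid(a^2-1)$. For $w=1$ the hull is the single segment $(0,1)\to(9,0)$ of slope $-\tfrac19$; for $w=2$ it is $(0,2)\to(3,1)\to(9,0)$ with slopes $-\tfrac13,-\tfrac16$; for $w\ge 3$ it is $(0,w)\to(1,2)\to(3,1)\to(9,0)$ with slopes $2-w,-\tfrac12,-\tfrac16$. In every case each side has horizontal length equal to the denominator of its slope, so every attached residual polynomial is linear, hence separable, and $g$ is $3$-regular: in case (i) Theorem \ref{2.3} applies verbatim, and in cases (ii),(iii) the general (multi-edge) form of Ore's theorem \cite{Ore} does. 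Counting the lattice points with positive integer coordinates on or below the polygon — directly, or by feeding each triangular piece into Lemma \ref{count} — yields $0$, $3$ and $4$ points respectively, giving the stated values of $v_3(I_K(\theta))$.

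For the bases I would invoke Proposition \ref{p1}: each displayed set already has the triangular shape $\frac{\theta^{i}+(\text{lower order})}{3^{k_i}}$ with non-decreasing $k_i$, and the exponent sums are $0,3,4$, matching the indices; so it remains only to check that every listed element is an algebraic integer. The generator $\mu:=\frac{\theta^{6}+a\theta^{3}+1}{3}$ lies in the pure cubic subfield $\Q(\theta^{3})=\Q(\sqrt[3]{a})$, and a symmetric-function computation over the conjugates of $\eta=\theta^{3}$ (the roots of $x^{3}-a$) gives its characteristic polynomial $x^{3}-x^{2}-\tfrac{a^{2}-1}{3}x-\tfrac{(a^{2}-1)^{2}}{27}$, which is integral exactly when $9\mid a^{2}-1$ (Dedekind's classical criterion \cite{Ded}); this disposes of $\mu,\theta\mu,\theta^{2}\mu$ in parts (ii),(iii) at once.

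The genuinely delicate point, and the one I expect to be the main obstacle, is the last generator of part (iii), $\nu:=\frac{\theta^{8}+\sum_{j=0}^{7}(a\theta)^{j}}{9}$. Setting $\Lambda:=\sum_{j=0}^{8}(a\theta)^{j}$, one has $9\nu=\Lambda-(a^{8}-1)\theta^{8}$; since $9\mid a^{8}-1$ (because $9\mid a^{2}-1\mid a^{8}-1$) the term $\frac{a^{8}-1}{9}\theta^{8}$ is visibly integral, so everything reduces to showing $\frac{\Lambda}{9}\in\Z_K$. Here I would use $\Lambda=\frac{(a\theta)^{9}-1}{a\theta-1}=\frac{a^{10}-1}{a\theta-1}$ together with $v_3(a^{10}-1)=v_3(a^{2}-1)=w$ and the rewriting $a\theta-1=a(\theta-a)+(a^{2}-1)$, and then evaluate the $\mathfrak{p}$-adic valuation of $\Lambda$ at each of the three primes $\mathfrak{p}_1,\mathfrak{p}_2,\mathfrak{p}_3$ above $3$ — their ramification indices $1,2,6$ and the values $v_{\mathfrak{p}}(\theta-a)$ being read off from the three slopes found above. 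A short computation then gives $v_{\mathfrak{p}_i}(\Lambda)\ge v_{\mathfrak{p}_i}(9)$ for $i=1,2,3$ when $w\ge 3$, so $\frac{\Lambda}{9}$, and hence $\nu$, is integral; with this in hand Proposition \ref{p1} completes all three parts.
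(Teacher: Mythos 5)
Your proposal is correct, and its Newton--polygon half coincides with the paper's: the same translation $\beta=\theta-a$, the same polygon of $g(x)=f(x+a)$ (single edge for $w=1$; slopes $-\tfrac13,-\tfrac16$ for $w=2$; slopes $2-w,-\tfrac12,-\tfrac16$ for $w\ge 3$), linear residual polynomials, and the lattice-point counts $0,3,4$. You are in fact a bit more careful than the paper on two points: you note explicitly that the multi-edge cases need the general form of Ore's theorem rather than the single-edge Theorem \ref{2.3} as stated, and you record the first slope as $2-w$ rather than the paper's ``$-1$'' (which is literally correct only when $w=3$, though the point count is unaffected). Where you genuinely diverge is in verifying integrality of the two nontrivial generators. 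For $\mu=\frac{\theta^6+a\theta^3+1}{3}$ you compute the characteristic polynomial of the numerator over the conjugates of $\theta^3$ directly, getting $x^3-3x^2-3(a^2-1)x-(a^2-1)^2$; the paper instead works with the auxiliary element $a^2\theta^6+a\theta^3+1$, derives its cubic $x^3-3x^2-3(a^4-1)x-(a^4-1)^2$ from the identity $(a\theta^3-1)\eta=a^4-1$, and then adjusts by $3k\theta^6$ using $a^2=1+9k$; your route is slightly cleaner since it removes the adjustment step (and both polynomials give the same integrality threshold, as $v_3(a^4-1)=v_3(a^2-1)$). For $\nu$, the paper stays global: it shows $\zeta=\sum_{j=0}^{8}(a\theta)^j$ satisfies a monic degree-$9$ polynomial and checks, using $v_3\binom{9}{j}$ and $v_3(a^{10}-1)\ge 3$, that $\zeta/9$ satisfies a monic polynomial with integer coefficients, then subtracts $t\theta^8$. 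You instead argue locally at the three primes above $3$, reading $e_{\mathfrak{p}}=1,2,6$ and $v_{\mathfrak{p}}(\theta-a)$ off the three slopes and computing $v_{\mathfrak{p}}(\Lambda)=v_{\mathfrak{p}}(a^{10}-1)-v_{\mathfrak{p}}(a\theta-1)\ge v_{\mathfrak{p}}(9)$ via $a\theta-1=a(\theta-a)+(a^2-1)$; the numbers check out ($v_{\mathfrak{p}_1}(\Lambda)=2$ exactly, $2w-1\ge 4$ and $6w-1\ge 12$ at the other two primes). The trade-off: your local argument is more conceptual and makes the role of $w\ge 3$ transparent, but it consumes the factorization/ramification content of Ore's theorem (each side with linear residual corresponds to one prime with ramification index the slope's denominator), which goes beyond what the paper's quoted Theorem \ref{2.3} formally supplies --- the paper's monic-polynomial trick deliberately avoids needing that. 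Both halves then feed into Proposition \ref{p1} exactly as in the paper, with exponent sums $0,3,4$ matching the computed indices.
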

\begin{proof}
Set $\xi = \theta-a$. Since $\Q(\theta) = \Q(\xi)$, it is sufficient to find out $v_3(I_K(\xi))$.  Note that $\xi$ satisfies the minimal polynomial $g(x) = f(x+a) = x^{9} + 9x^8a + 9x^7a^2 + 3x^6a^3 + 9x^5a^4 + 9x^4a^5 + 3x^3a^6+9x^2a^7+9xa^8 + a(a^2-1)(a^2+1)(a^4+1)$. \\
(i) When $9 \nmid (a^{2}-1)$, then one can see that the $3$-Newton polygon of $g(x)$ has single egde having vertices $(0,1)$ and $(9,0)$. Hence the residual polynomial corresponding to this edge is linear in $\F_3[Y]$. So by Theorem \ref{2.3},  we see that $v_3(I_K(\xi))$ and hence $v_3(I_K(\theta))$ equals the number of points with positive integer coordinates lying on or below the $3$-Newton polygon of $g(x)$. Therefore, we have $v_{3}(I_K(\theta)) = 0$. Hence by Proposition \ref{p1}, it follows that $\{1, \theta, \cdots, \theta^{8}\}$ is a $3$-integral basis of $K$.\\
(ii) When $v_3(a^{2}-1) = 2$, then one can easily verify that the $3$-Newton polygon of $g(x)$ has two egdes having slopes $\frac{-1}{3}$, $\frac{-1}{6}$ and the residual polynomials corresponding to these edges are linear polynomial. Therefore  by Theorem \ref{2.3},  we see that $v_{3}(I_K(\theta)) = 3$. Now we claim that the element $\frac{\theta^6+a\theta^3+1}{3}$ is $3$-integral when $9$ divides $a^2-1$. Denote $a^2\theta^6 + a\theta^3 + 1$ by $\eta$. Keeping in mind that $\theta^9 = a$, we see that $(a\theta^3 - 1) \eta = a^4 - 1$ and hence $(a\theta^3)^3\eta^3 = \eta^3 + (a^4-1)^3 + 3\eta^2(a^4-1)+3\eta(a^4-1)^2.$ Using the fact that $\theta^9 = a$ and $a^4\neq \pm 1$ (as $f(x)$ is an irreducible polynomial), one can easily check that $\eta$ satisfies the following equation $$x^3 - 3x^2 - 3(a^4-1)x - (a^4-1)^2 = 0.$$
Hence $\frac{\eta}{3}$ satisfies the polynomial $x^3-x^2-\frac{(a^4-1)}{3}x - \frac{(a^4-1)^2}{27},$ which has integer coefficients as $9$ divides $a^2-1$. Since $9$ divides $a^2-1$, write $a^2 = 1+9k$ for some $k\in \Z$. As $\frac{a^2\theta^6 + a\theta^3 + 1}{3}$ is an algebraic integer, we have $\frac{a^2\theta^6 + a\theta^3 + 1}{3} -  3k\theta^6  = \frac{\theta^6+a\theta^3+1}{3}$ is an algebraic integer,
which proves the claim.  As $v_{3}(I_K(\theta)) = 3$, in view of Proposition \ref{p1}, it now follows that the set $\{1, \theta, \theta^2, \cdots, \theta^5, \frac{\theta^6+a\theta^3+1}{3},$ $  \frac{(\theta^6+a\theta^3+1)\theta}{3},  \frac{(\theta^6+a\theta^3+1)\theta^2}{3}\}$ is a $3$-integral basis of $K$.\\
(iii) When $v_3(a^{2}-1) \geq 3$, then one can check that the $3$-Newton polygon of $g(x)$ has three egdes having slopes $-1$, $\frac{-1}{2}$, $\frac{-1}{6}$ and the residual polynomials corresponding to these edges are linear polynomial. Therefore  by Theorem \ref{2.3},  we see that  $v_{3}(I_K(\theta)) = 4$. Now we claim that the element $\frac{\theta^8+\sum\limits_{j=0}^{7}(a\theta)^{j}}{9}$ is $3$-integral. Denote $\sum\limits_{j=0}^{8}(a\theta)^{j}$ by $\zeta$. Similar to the above case, keeping in mind that $\theta^9 = a$ and $a^{10} \neq \pm 1$, one can easily see that $\zeta$ satisfies the polynomial $x^9-\sum\limits_{j=1}^{9}\binom{9}{j}\zeta^{9-j}(a^{10}-1)^{j-1}.$ Hence $\frac{a^8\theta^8+\sum\limits_{j=0}^{7}(a\theta)^{j}}{9}$ satisfies the polynomial $$x^9-\sum\limits_{j=1}^{9}\frac{\binom{9}{j}\zeta^{9-j}(a^{10}-1)^{j-1}}{9^{j}} = g(x) \text{ (say)}.$$ Since $27$ divides $a^2 - 1$, the polynomial $g(x)$ has integer coefficients. Therefore $\frac{a^8\theta^8+\sum\limits_{j=0}^{7}(a\theta)^{j}}{9}$ is an algebraic integer. Keeping in mind the fact that $9$ divides $a^8 - 1$, we can write $a^8 = 1+9t$ for some $t\in \Z$. So $\frac{a^8\theta^8+\sum\limits_{j=0}^{7}(a\theta)^{j}}{9} - t\theta^8 = \frac{\theta^8+\sum\limits_{j=0}^{7}(a\theta)^{j}}{9}$ is an algebraic integer and hence our claim follows. Since $v_{3}(I_K(\theta)) = 4$, in view of Proposition \ref{p1} and claim proved in case (ii), it now follows that the set $\{1, \theta, \theta^2, \cdots, \theta^5, \frac{\theta^6+a\theta^3+1}{3},$ $  \frac{(\theta^6+a\theta^3+1)\theta}{3},  \frac{\theta^8+\sum\limits_{j=0}^{7}(a\theta)^{j}}{9}\}$ is a $3$-integral basis of $K$.
\end{proof}
\section{Proof of Theorem \ref{main} and Corollary \ref{main3}.}
\begin{proof}[Proof of Theorem \ref{main}.] Assertions (1), (2) and (4) of Theorem \ref{main} follow from Lemma \ref{as1}, Theorem \ref{as2} and Lemma \ref{as4}, respectively. Now we prove assertion (3). By a basic result \cite[Propositions 2.9,2.13]{Nar}, we see that
$$d_K(I_K(\theta))^2 = (-1)^{\frac{9(9-1)}{2}}N_{K/\Q}(f'(\theta)) = 9^{9}a^{8},$$ where $d_K$ is the discriminant of $K$. Hence, if a prime $q$ does not divide $3a$, then it can not divide $I_K(\theta)$, i.e., $v_q(I_K(\theta)) = 0$. Therefore, by Proposition \ref{p1}, $\{1, \theta, \theta^2, \cdots, \theta^{8}\}$ is a $q$-integral basis of $K$. This completes the proof of the theorem.
\end{proof}
\begin{proof}[Proof of Corollary \ref{main3}.] In view of assertions (1), (3) and (4)(i) of Theorem \ref{main}, we see that $\{1, \theta, \cdots, \theta^{8}\}$ is a $q$-integral basis of $K$ for any prime $q$. %Hence, one can easily check that $\beta_j = 0$ for $1\leq j\leq n-1$ in Theorem \ref{main2}. 
	Therefore, by Theorem \ref{main2}, $\{1, \theta, \cdots, \theta^{8}\}$ is an integral basis of $K$.
\end{proof}
\noindent{\it Conflict of interest statement, data availability statement, Author Contribution and Funding Statement:} Not Applicable.
\bibliographystyle{plain}

\begin{thebibliography}{33}
 
 \baselineskip 13pt
\bibitem{ANH2} S. Ahmad, T. Nakahara, S.M. Husnine, Power integral bases for certain pure sextic fields, Int. J. Number Theory, 10 (8), 2014, 2257-2265.
 \bibitem{Bor} Z. I. Borevich and I. R. Shafarevich, { Number Theory}, Academic Press, 1966.
  \bibitem{Ded} R. Dedekind, \"{U}ber die Anzahl der Idealklassen in reinen kubischen Zahlk\"{o}rpern, Journal f\"{u}r die reine und angewandte Mathematik, 121, 1900, 40-123.
   \bibitem{Fun} T. Funakura, On integral bases of pure quartic fields, Math. J. Okayama Univ, 26, 1984, 27-41.
   \bibitem{Ga-Re} I. Ga\'{a}l, L. Remete, Integral basis and monogenity of pure number fields, J. Number Theory, {173}, 2017, 129-146. 
   \bibitem{G-M-N} J. Gu{\`a}rdia, J. Montes, E. Nart, Newton polygons of higher order in algebraic number theory, Trans. Amer. Math. Soc. 364(1), 2012, 361-416.
  \bibitem{G-M-N2} J. Gu{\`a}rdia, J. Montes, E. Nart,  Higher Newton polygons and integral bases, J. Number Theory, 147, 2015, 549-589.
    \bibitem{HN} A. Hameed, T. Nakahara, Integral bases and relative monogenity of pure octic fields, Bull. Math. Soc. Sci. Math. Roumanie
 Tome, 58(106) No. 4, 2015, 419-433.
 \bibitem{Jakhar} A. Jakhar, Explicit integral basis of $\Q(\sqrt[p_1p_2]{a})$, 2022, J. Number Theory, 240, 2022, 254--271.
 \bibitem{Jak} A. Jakhar, Explicit integral basis of pure sextic fields, Rocky Mountain J. Math., 51(2), 2021, 571-580.
 \bibitem{JS} A. Jakhar, N. Sangwan, Integral basis of pure prime degree number fields, Indian J. Pure Appl. Math., 50(2), 2019, 309-314.
%    \bibitem{JKS1} A. Jakhar, S. K. Khanduja, N. Sangwan, On discriminant of pure number fields,  Colloq. Math., 167,  2022, 149-157.
  \bibitem{JKS} A. Jakhar, S. K. Khanduja, N. Sangwan, On integral basis of pure number fields, Mathematika, 67, 2021, 187-195.
   \bibitem{Book} J. M. de Koninck, A. Mercier, 1001 Problems in Classical Number Theory, American  Mathematical Society, Providence Rhode Island, 2007. 
   \bibitem{Mar} D. A. Marcus, Number Fields, Universitext, Springer-Verlag, New York, 1977.
 \bibitem{Nar} W. Narkewicz, Elementary and Analytic Theory of Algebraic Numbers, Springer-Verlag, Berlin Heidelberg, 2004. 
   \bibitem{Ore} {{\O{}}}. Ore, Newtonsche Polygone in der Theorie der algebraischen K\"{o}rper, {Mathematische
     Annalen} { 99}, 1928, 84--117. 
  \bibitem{Wes} J. Westlund, On the fundamental number of the algebraic number field $k(\sqrt[p]{m})$, Trans. Amer. Math. Soc., 11, 1910, 388-392.
 \end{thebibliography}

\end{document}